\Crefname{subsection}{Section}{Sections}
\numberwithin{equation}{section}
\numberwithin{theorem}{section} 
\numberwithin{table}{section}
\numberwithin{figure}{section}
\declaretheorem[name=Example,style=definition,numberlike=theorem,Refname={Example,Examples}]{example}
\declaretheorem[name=Remark,style=definition,numberlike=theorem,Refname={Remark,Remarks}]{remark}
\declaretheorem[name=Assumption,style=definition,numberlike=theorem,Refname={Assumption,Assumptions}]{assumption} 
\DeclarePairedDelimiterX\Set[2]{\lbrace}{\rbrace}%
{ #1 \,:\, #2 } 
\DeclarePairedDelimiterX\inprod[2]{\langle}{\rangle}%
{ #1 , #2 } 
\newcommand{\bigO}{\mathcal{O}} 
\DeclareMathOperator*{\argmin}{arg\,min}  
\DeclareMathOperator*{\argmax}{arg\,max}  
\DeclareMathOperator\erf{erf} 
\newcommand{\expec}{\mathbb{E}}           
\newcommand{\gauss}{\mathcal{N}}          
\renewcommand{\epsilon}{\varepsilon}
\renewcommand{\phi}{\varphi}
\newcommand{\R}{\mathbb{R}} 
\newcommand{\N}{\mathbb{N}} 
\DeclareMathOperator{\neper}{e} 
\newcommand{\transpose}{\mathsf{T}}         
\newcommand{\vct}[1]{\mathbf{#1}} 
\newcommand{\vctg}{\pmb}          
\newcommand{\mK}{\vct{K}}
\newcommand{\mS}{\vct{S}}
\newcommand{\mP}{\vct{P}}
\newcommand{\mZero}{\vct{0}}
\newcommand{\ma}{\vct{a}}
\newcommand{\mb}{\vct{b}}
\newcommand{\mx}{\vct{x}}
\newcommand{\my}{\vct{y}}
\newcommand{\mm}{\vct{m}}
\newcommand{\mq}{\vct{q}}
\newcommand{\mz}{\vct{z}}
\newcommand{\muu}{\vct{u}}
\newcommand{\ms}{\vct{s}}
\newcommand{\mw}{\vct{w}}
\newcommand{\mlambda}{\vctg\lambda}
\pgfplotsset{compat=newest}
\pgfplotsset{plot coordinates/math parser=false}
\newlength{\figwidth}
\newlength{\figheight}
\definecolor{gray1}{gray}{0.2}
\definecolor{gray2}{gray}{0.4}
\definecolor{gray3}{gray}{0.5}
\definecolor{gray4}{gray}{0.8}
\definecolor{gray5}{gray}{0.9}
\newlength{\prel}\setlength{\prel}{0.1cm} 
\newcommand{\nodeset}{\mathcal{X}}
\newcommand{\rkhs}{\mathcal{H}}
\newcommand{\data}{\mathcal{D}}
\newcommand{\kmean}{k_\mu}
\newcommand{\sobolev}{W}
\newcommand{\difop}{\mathrm{D}}
\newcommand{\textsm}[1]{{\scriptscriptstyle\text{#1}}}
\newcommand{\her}{\mathrm{H}}
\newcommand{\TheTitle}{Fully symmetric kernel quadrature} 
\newcommand{\TheAuthors}{Toni Karvonen and Simo Särkkä}
\headers{\TheTitle}{\TheAuthors}
\title{{\TheTitle}\thanks{Submitted to arXiv on March 18, 2017. Revised on October 11, 2017 and on January 6, 2018. Accepted for publication in \emph{SIAM Journal on Scientific Computing} on January 3, 2018.
\funding{This work was supported by Aalto ELEC Doctoral School as well as Academy of Finland projects 266940 and 273475.}}}
\author{
  Toni Karvonen\thanks{Department of Electrical Engineering and Automation, Aalto University, Espoo, Finland (\email{toni.karvonen@aalto.fi}, \email{simo.sarkka@aalto.fi}).}
  \and
  Simo Särkkä\footnotemark[2]
}
\begin{document}

\maketitle

\begin{abstract} Kernel quadratures and other kernel-based approximation methods typically suffer from prohibitive cubic time and quadratic space complexity in the number of function evaluations. The problem arises because a system of linear equations needs to be solved. In this article we show that the weights of a kernel quadrature rule can be computed efficiently and exactly for up to tens of millions of nodes if the kernel, integration domain, and measure are fully symmetric and the node set is a union of fully symmetric sets. This is based on the observations that in such a setting there are only as many distinct weights as there are fully symmetric sets and that these weights can be solved from a linear system of equations constructed out of row sums of certain submatrices of the full kernel matrix. We present several numerical examples that show feasibility, both for a large number of nodes and in high dimensions, of the developed fully symmetric kernel quadrature rules. Most prominent of the fully symmetric kernel quadrature rules we propose are those that use sparse grids.
\end{abstract}

\begin{keywords} 
Numerical integration, kernel quadrature, Bayesian quadrature, reproducing kernel Hilbert spaces, fully symmetric sets, sparse grids
\end{keywords}

\begin{AMS}
46E22, 47B32, 60G15, 65C05, 65C50, 65D30, 65D32
\end{AMS}

\section{Introduction}

Let $\Omega$ be a subset of $\R^d$, $\mu$ a measure on $\Omega$, and \sloppy{${f\colon\Omega \to \R}$} a function that is integrable with respect to $\mu$. Computation of the integral \sloppy{${\mu(f) \coloneqq \int_\Omega f \dif \mu}$} is a recurring problem in applied mathematics and statistics. In most cases, this integral has no readily available analytical form and one must resort to a quadrature rule (or, occasionally, a cubature rule if $d > 1$) for its approximation. A quadrature rule $Q$ is a linear functional of the form
\begin{equation*}
Q(f) \coloneqq \sum_{i=1}^n w_i f(\mx_i) \approx \int_\Omega f \dif \mu,
\end{equation*}
where $\mx_i \in \R^d$ are the nodes and $w_i \in \R$ are the weights. The nodes and weights are often chosen so that the quadrature approximation is exact whenever the integrand is a low-degree polynomial~\cite{DavisRabinowitz1984,Cools1997}---such methods are called classical or polynomial quadrature rules in this article (we reserve the term Gaussian for rules that use $n$ nodes to integrate polynomials up to degree $2n-1$ exactly). Another possibility is to use Monte Carlo or quasi Monte Carlo methods~\cite{Caflisch1998}.

Here we study \emph{kernel quadrature rules} that are, for arbitrary fixed nodes, optimal in the reproducing kernel Hilbert space (RKHS) induced by a user-specified positive-definite kernel. In this setting, optimality is measured in terms of the worst-case error (or, equivalently, the average-case error~\cite{Ritter2000,NovakWozniakowski2010vol2}). Kernel quadrature rules go back at least to the work of Larkin~\cite{Larkin1970,Larkin1972} in the 1970s. Lately, these rules have been a subject of renewed interest because they can be used for numerical integration on scattered data sets~\cite{Bezhaev1991,SommarivaVianello2006a} and they carry a probabilistic interpretation as posterior means for Gaussian processes assigned to the integrand~\cite{OHagan1991}. The probabilistic interpretation, equivalent to the RKHS formulation we use, is interesting because it allows for statistical modelling of error in numerical integration and is one of main motivators behind this article. The above topics, including the probabilistic interpretation, are reviewed in \Cref{sec:kernelquad}.

An advantage of kernel quadrature rules is that the nodes are \emph{not} prescribed as opposed to polynomial quadrature rules (polynomial rules with arbitrary nodes could probably be developed along the lines of de Boor and Ron interpolation~\cite{deBoorRon1990,NarayanXiu2012}). The flexibility comes with the price of having to solve the weights from a linear system of $n$ equations, a task of cubic time and quadratic space complexity. It would not be practical to tabulate the weights beforehand as they depend on the kernel, integration domain, and measure. Partially due to the computational cost, only low numbers of nodes have been used in kernel quadrature and much of the literature is concerned with optimal selection of the nodes. See~\cite{Larkin1970,OHagan1991,OHagan1992,Minka2000,SarkkaHartikainenSvenssonSandblom2016} for some optimal node configurations,~\cite{Oettershagen2017} for an algorithm to generate such nodes in one dimension, and~\cite{BriolOatesGirolamiOsborne2015,BriolOatesCockayneChenGirolami2017} for other non-optimal alternatives. Efficient computation of the optimal nodes in higher dimensions is an open problem and not the topic of this article. Instead, we want to find nodes for which the weights can be computed easily and fast.

There is not much work on extending applicability of kernel quadrature to integration problems where it is necessary to use a large number of nodes due to high dimensionality or high level of accuracy that is desired. O'Hagan~\cite{OHagan1991,OHagan1992} has proposed some computationally beneficial product grid (number of nodes grows quickly in dimension and when the grid is refined) and simplex (only $d+1$ nodes) designs that are too inflexible to be of much use in many situations. Most exciting work is by Oettershagen~\cite{Oettershagen2017} who has recently shown that the standard approach to sparse grid quadrature can be used to achieve quadratic time complexity. Furthermore, several fast and approximate kernel-based methods have been developed in scattered data approximation, statistics, and machine learning literature (a compendium can be found in, e.g.,~\cite[Supplement C]{BriolOatesGirolamiOsborneSejdinovic2016}). However, accuracy of quadrature rules is often strongly dependent on the weights having been computed exactly and approximate weights also give rise to some philosophical objections if they are to be used for statistical modelling of error of an integral approximation.

In this article we show that if certain structure is imposed on the node set, then the kernel quadrature weights can be computed \emph{exactly} and in a very simple manner. Our approach is based on \emph{fully symmetric sets}~\cite{Genz1986,GenzKeister1996} which are point sets that can be obtained from a given vector through permutations and sign changes of its coordinates. In \Cref{sec:fss} we show that some symmetricity assumptions on $\Omega$ and $\mu$ (see \Cref{ass:main}) lead, for a large class of kernels that includes all isotropic kernels, to tractable computation of the weights if the node set is a union of fully symmetric sets. Depending on the dimension, the weights can be computed for sets of this type that contain up to tens of millions of nodes. The crucial observation under our assumptions is that there are only as many distinct weights as there are fully symmetric sets making up the node set. The \emph{fully symmetric kernel quadratures} we construct exhibit the following advantageous properties:
\begin{itemize}
\item The algorithm (see \Cref{sec:fskq}) for exact computation of the weights is exceedingly simple and easy to implement.
\item If there are $J$ fully symmetric sets containing $n$ nodes in total, only $Jn$ kernel evaluations are needed. In all situations we can envision, $J$ is at most a few hundred while $n$ can, as mentioned, go up to millions (\Cref{sec:exapriori} contains an example where $J = 832$ and $n=15{,}005{,}761$). The weights are solved from a system of $J$ linear equations and only a $J \times J$ matrix needs to be stored.
\item The node selection scheme remains quite flexible and the number of nodes does not grow too fast with the dimensions (see \Cref{eq:FSScardinality} and \Cref{table:RGsize}) as happens when, for example, full Cartesian grids are used. The smallest non-trivial fully symmetric sets contain $2d$ points. 
\end{itemize}

In \Cref{sec:fsssel} we discuss a number of possible ways of selecting the fully symmetric sets. Sparse grids~\cite{BungartzGriebel2004}, popular in polynomial-based high-dimensional quadrature, are maybe the most obvious and promising choice. For kernel quadratures that use Clenshaw--Curtis sparse grids~\cite{NovakRitter1996} we also provide some theoretical convergence guarantees in \Cref{thm:SGconv}. Even though kernel quadrature rules on sparse grids can be efficiently constructed without the use of fully symmetric sets~\cite{Oettershagen2017}, our approach appears to be computationally competitive. In any case, sparse grids serve as a straightforward node selection scheme for showcasing that our algorithm indeed works.

The fast weight algorithm for computing the weights is not easily extended to fitting of the kernel parameters that often have considerable effect on accuracy of the integral approximation. Our experiments show that fully symmetric kernel quadratures are feasible but we have to resort to ad hoc solutions for fitting the kernel parameters or know them beforehand. Development of efficient fitting procedures is left for future research. This is discussed in \Cref{sec:lengthscale}.

Finally, it is worth remarking that this article is not the first instance of fully symmetric sets being used in conjunction with kernel quadrature. Arguably the simplest non-trivial fully symmetric kernel quadrature rule (this rule appears briefly in \Cref{sec:wcemin}) has seen use in approximate filtering of non-linear systems~\cite{SarkkaHartikainenSvenssonSandblom2016,PrueherStraka2017,PruherEtal2017}, but without an efficient weight computation algorithm.

\section{Kernel quadrature}\label{sec:kernelquad}

This section reviews the basics of quadrature rules in reproducing kernel Hilbert spaces. We also briefly discuss connections to probabilistic modelling of numerical algorithms. See~\cite{Larkin1970,BriolOatesGirolamiOsborneSejdinovic2016,Oettershagen2017} for proofs and additional references. Standard references on reproducing kernel Hilbert spaces are~\cite{Aronszajn1950,BerlinetThomasAgnan2004}.

\subsection{Quadrature in reproducing kernel Hilbert spaces}

A kernel \sloppy{${k \colon \Omega \times \Omega \to \R}$} is said to be positive-definite if the $n \times n$ kernel Gram matrix $[\mK]_{ij} \coloneqq k(\mx_i, \mx_j)$ is positive-definite for every $n \geq 0$ and any distinct $\mx_1,\ldots,\mx_n \in \Omega$. Every continuous positive-definite kernel defines a unique reproducing kernel Hilbert space $\rkhs$ of functions $f\colon\Omega\to\R$ through the properties (i) $k(\cdot,\mx)\in\rkhs$ for every $\mx\in\Omega$ and (ii) pointwise evaluations of any $f\in\rkhs$ can be represented in terms of inner product with the kernel: $\inprod{f}{k(\cdot,\mx)}_\rkhs = f(\mx)$. The latter of these is called the reproducing property. The integral operator $\mu$ and the quadrature rule $Q$ are bounded linear functionals on $\rkhs$ under the non-restrictive assumption $\int_\Omega \sqrt{k(\mx,\mx)} \dif\mu(\mx) < \infty$. The worst-case error (WCE) $e(Q)$ of a quadrature rule $Q$ is defined in terms of the dual norm
\begin{equation}\label{eq:RKHSerror}
e(Q) \coloneqq \norm[0]{\mu - Q}_{\rkhs^*} = \sup_{\norm[0]{f}_\rkhs \leq 1} \abs[0]{\mu(f) - Q(f)}
\end{equation}
that can be also written as $e(Q) = \norm[0]{\mu[k(\cdot,\mx)] - Q[k(\cdot,\mx)]}_\rkhs$. Why this is a reasonable measure of error of the quadrature rule is apparent after an application of the reproducing property and the Cauchy--Schwarz inequality:
\begin{equation*}
\abs[0]{\mu(f) - Q(f)} = \abs[1]{\inprod{f}{\mu[k(\cdot,\mx)] - Q[k(\cdot,\mx)]}_\rkhs} \leq e(Q)\norm[0]{f}_\rkhs
\end{equation*}
for $f \in \rkhs$. That is, if the integrand belongs to the RKHS, convergence in the usual sense of diminishing integration error is implied by convergence to zero of the WCE. Relationship between the kernel and its induced RKHS is further discussed in \Cref{sec:convergence} where we also provide two convergence theorems for the worst-case error.

The quadrature rule that, for arbitrary fixed distinct nodes $\nodeset = \{\mx_1,\ldots,\mx_n\}$, minimises the worst-case error~\eqref{eq:RKHSerror} is called the kernel quadrature rule and denoted by $Q_k$. This rule is unique and the optimal weights $\mw = (w_1,\ldots,w_n)$ can be solved from
\begin{equation}\label{eq:KQweights}
\underbrace{\begin{pmatrix} k(\mx_1,\mx_1) & \cdots & k(\mx_1,\mx_n) \\ \vdots & \ddots & \vdots \\ k(\mx_n,\mx_1) & \cdots & k(\mx_n,\mx_n) \end{pmatrix}}_{=\mK} \begin{pmatrix} w_1 \\ \vdots \\ w_n\end{pmatrix} = \underbrace{\begin{pmatrix} \kmean(\mx_1) \\ \vdots \\ \kmean(\mx_n) \end{pmatrix}}_{=\kmean(\nodeset)},
\end{equation}
where the kernel Gram matrix $\mK$ is positive-definite---and hence non-singular---and $\kmean(\mx) \coloneqq \int_\Omega k(\mx,\mx')\dif\mu(\mx')$ is the kernel mean, an object of much independent interest~\cite{MuandetFukumizuSriperumbuduScholkopf2017}. The kernel quadrature rule and its worst-case error are
\begin{align}
Q_k(f) &= \sum_{i=1}^n [\mK^{-1}\kmean(\nodeset)]_i f(\mx_i) = \my^\transpose\mK^{-1}\kmean(\nodeset), \nonumber \\
e(Q_k)^2 &= \int_\Omega\int_\Omega k(\mx,\mx') \dif\mu(\mx)\dif\mu(\mx') - \kmean(\nodeset)^\transpose \mK^{-1}\kmean(\nodeset) = \mu(\kmean) - Q_k(\kmean), \label{eq:wce}
\end{align}
where $\my = (f(\mx_1),\ldots,f(\mx_n))$. 

An \emph{optimal kernel quadrature rule} minimises the worst-case error also over the node set (of fixed cardinality). Such rules cannot be constructed efficiently at the moment in dimensions larger than one. We discuss structurally constrained versions in \Cref{sec:wcemin}.

\subsection{Probabilistic interpretation}\label{sec:probnum}

The probabilistic interpretation of kernel quadrature as \emph{Bayesian quadrature} is a part of the emergent field \emph{probabilistic numerical computing}~\cite{Diaconis1988,OHagan1992,HennigOsborneGirolami2015,CockayneOatesSullivanGirolami2017}, origins of which can be traced at least back to the work of Larkin~\cite{Larkin1972}. This interpretation is a major motivator behind the present article.

In Bayesian quadrature, the integrand $f$ is typically modelled as a Gaussian process~\cite{OHagan1978,RasmussenWilliams2006} (prompting the alternative term \emph{Gaussian process quadrature}) with the covariance kernel $k$. With the node locations $\mx_1,\ldots,\mx_n$ and function evaluations $f(\mx_1),\ldots,f(\mx_n)$ considered the ``data'' $\data$, the posterior $f \mid \data$ is a Gaussian process with the mean and covariance
\begin{align*}
\expec[f(\mx) \mid \data] &= \my^\transpose \mK^{-1}k(\nodeset, \mx), \\ 
\mathbb{C}[f(\mx),f(\mx') \mid \data] &= k(\mx, \mx') - k(\mx, \nodeset)^\transpose \mK^{-1} k(\mx, \nodeset),
\end{align*}
where $[k(\mx,\nodeset)]_i = k(\mx,\mx_i)$. Because $\mu$ is a linear operator, this induces the Gaussian posterior distribution $\mu(f) \mid \data$ on the integral with the mean $\expec[\mu(f) \mid \data]$ and variance $\mathbb{V}\big[\mu(f) \mid \data\big]$ that turn out to be precisely $Q_k(f)$ and $e(Q_k)^2$ from the preceding section. The worst-case error can be therefore interpreted as a measure of numerical uncertainty over the integral approximation and then exploited in for instance uncertainty quantification and allocation of limited computational resources in computational pipelines~\cite{CockayneOatesSullivanGirolami2017}. Clear expositions of this probabilistic viewpoint to numerical integration are~\cite{OHagan1991,Minka2000,BriolOatesGirolamiOsborneSejdinovic2016} and the methodology is quite popular in machine learning (see, e.g.,~\cite{RasmussenGhahramani2002,Gunter2014}).

\section{Fully symmetric kernel quadrature}\label{sec:fss}

This is the main section of the article. We introduce fully symmetric sets, their connection to multivariate quadrature rules and prove our main result, \Cref{thm:main}, on computational benefits of doing kernel quadrature with node sets that are unions of fully symmetric sets.

\subsection{Fully symmetric sets}

\begin{figure}[t]
\centering
  \begin{subfigure}[b]{0.44\textwidth}
  \centering
  \includegraphics{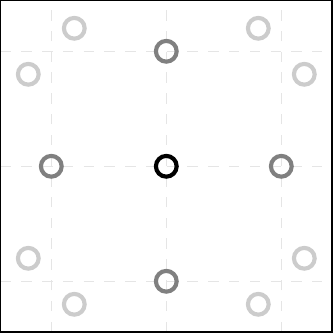}
  \end{subfigure}
  \begin{subfigure}[b]{0.44\textwidth}
  \centering
  \includegraphics{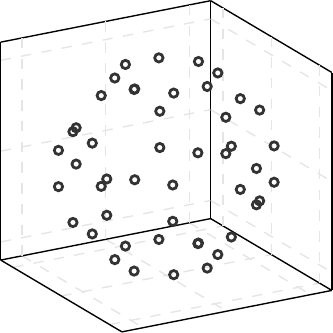}
  \end{subfigure}
  \caption{Examples of fully symmetric sets in two and three dimensions. Left: the fully symmetric sets $[0,0]$, $[1,0]$, and $[1.2,0.8]$ in $\R^2$. Right: the fully symmetric set $[1, 0.5, 0.2]$ that consists of 48 elements in $\R^3$.}\label{fig:FSS}
\end{figure}

\begin{table}[t]
\begin{center}
\captionof{table}{Cardinalities, as computed from \Cref{eq:FSScardinality}, of fully symmetric sets generated by $m = 1,\ldots,9$ distinct non-zero generators for dimensions $d=2,\ldots,9$.}\label{table:RGsize}
\small
\caption*{\small{\textbf{Dimension}}}\vspace{-0.3cm}
\begin{tabular}[t!]{c|c c c c c c c c c c}
\arrayrulecolor{gray4}
$m$ & 2 & 3 & 4 & 5 & 6 & 7 & 8 & 9 \\
\Xhline{1pt}
1 & 4 & 6 & 8 & 10 & 12 & 14 & 16 & 18 \\ \hline
2 & 8 & 24 & 48 & 80 & 120 & 168 & 224 & 288\\ \hline
3 & & 48 & 192 & 480 & 960 & 1,680 & 2,688 & 4,032\\ \hline
4 & & & 384 & 1,920 & 5,760 & 13,440 & 26,880 & 48,384\\ \hline
5 & & & & 3,840 & 23,040 & 80,640 & 215,040 & 483,840\\ \hline
6 & & & & & 46,080 & 322,560 & 1,290,240 & 3,870,720 \\ \hline
7 & & & & & & 645,120 & 5,160,960 & 23,224,320\\ \hline
8 & & & & & & &  10,321,920 & 92,897,280\\ \hline
9 & & & & & & & & 185,794,560 \\
\end{tabular}
\end{center}
\end{table}

A fully symmetric set is a point set in $\R^d$ that is obtained from a given vector through permutations and sign changes of its coordinates. Let $\Pi_d$ be the set of all permutations $\mq = (q_1,\ldots,q_d)$ of the integers $1,\ldots,d$ and $S_d$ the set of all vectors of the form $\ms = (s_1,\ldots,s_d)$ with each $s_i$ either $1$ or $-1$. Then, given $d$ non-negative scalars $\mlambda = (\lambda_1,\ldots,\lambda_d)$ called \emph{generators}, the point set
\begin{equation}\label{eq:FSS}
[\mlambda] = [\lambda_1,\ldots,\lambda_d] \coloneqq \bigcup_{\mq \in \Pi_{d}}\bigcup_{\ms \in S_d} \big\{(s_1\lambda_{q_1},\ldots,s_d\lambda_{q_d}) \big\} \subset \R^d
\end{equation}
is the fully symmetric set generated by the \emph{generator vector} $\mlambda$. With $m$ the number of non-zero generators, $m_0$ the number of zero generators (i.e.\ $m = d - m_0$), and $m_1,\ldots,m_l$ multiplicities of distinct non-zero generators so that $\sum_{i=1}^l m_i = m$, cardinality of the fully symmetric set~\eqref{eq:FSS} is
\begin{equation}\label{eq:FSScardinality}
\#[\lambda_1,\ldots,\lambda_d] = \frac{2^m d!}{m_0! \cdots m_l!}.
\end{equation}

An alternative way of writing \Cref{eq:FSS} is via \emph{permutation matrices} as $[\mlambda] = \bigcup_{\mP} \mP\mlambda$, where the the union is over all $d\times d$ permutation and sign change matrices $\mP$. These are matrices that have on each row and column exactly one element that is either $1$ or $-1$ and the rest are zero. Any element of a fully symmetric set can be obtained from any other via linear transformation by an appropriate permutation matrix. How \Cref{eq:FSS} works and what the resulting point sets look like is illustrated in two and three dimensions in \Cref{ex:FSS} and \Cref{fig:FSS}. Note that all elements of a fully symmetric set are equidistant from the origin which is to say that if $\mx \in [\lambda_1,\ldots,\lambda_d]$, then $\norm[0]{\mx}^2 = \norm[0]{\mlambda}^2 = \lambda_1^2 + \cdots + \lambda_d^2$. We also need the concept of a fully symmetric function.
\begin{definition} A function $f\colon\Omega \to \R$ is \emph{fully symmetric} if it is constant in every fully symmetric set. That is, with $\mlambda$ any generator vector, it holds that $f(\mx) = f(\mx')$ for any $\mx,\mx'\in\Omega \cap [\mlambda]$. Alternatively, $f(\mP\mx) = f(\mx)$ for any $\mx\in\Omega$ and any permutation and sign change matrix $\mP$ such that $\mP\mx \in \Omega$.
\end{definition}

\begin{example}\label{ex:FSS} In $\R^3$, the non-zero and distinct generators $\lambda_1$ and $\lambda_2$ generate the fully symmetric set
\begin{equation*}
[\lambda_1,\lambda_2,0] = 
\begin{subequations}
\begin{aligned}
\big\{&(\lambda_1,\lambda_2,0), && (-\lambda_1, \lambda_2, 0), && (\lambda_1,-\lambda_2,0), && (-\lambda_1,-\lambda_2,0),\big.\\
&(\lambda_2,\lambda_1,0), && (-\lambda_2,\lambda_1,0), && (\lambda_2,-\lambda_1,0), && (-\lambda_2,-\lambda_1,0), \\
&(0,\lambda_1,\lambda_2), && (0,-\lambda_1,\lambda_2), && (0,\lambda_1,-\lambda_2), && (0,-\lambda_1,-\lambda_2), \\
&(0,\lambda_2,\lambda_1), && (0,-\lambda_2,\lambda_1), && (0,\lambda_2,-\lambda_1), && (0,-\lambda_2,-\lambda_1), \\
&(\lambda_1,0,\lambda_2), && (-\lambda_1,0,\lambda_2), && (\lambda_1,0,-\lambda_2), && (-\lambda_1,0,-\lambda_2), \\
&\big. (\lambda_2,0,\lambda_1), && (-\lambda_2,0,\lambda_1), && (\lambda_2,0,-\lambda_1), && (-\lambda_2,0,-\lambda_1) \big\}
\end{aligned}
\end{subequations}
\end{equation*}
that has $2^2\times 3!/(1!\times1!\times1!) = 24$ elements. In terms of permutation matrices, the element $(-\lambda_1,0,\lambda_2)$ is
\begin{equation*}
\begin{pmatrix} -\lambda_1 \\ 0 \\ \lambda_2 \end{pmatrix} = \begin{pmatrix} -1 & 0 & 0 \\ 0 & 0 & 1 \\ 0 & 1 & 0\end{pmatrix} \begin{pmatrix} \lambda_1 \\ \lambda_2 \\ 0 \end{pmatrix}.
\end{equation*}
\end{example}

The method we have used to generate fully symmetric sets out of user-specified generator vectors is detailed in Algorithm 1 in \Cref{sec:fskq}. There are many other possibilities; we do not claim that the one presented is the optimal implementation.

\subsection{Fully symmetric quadrature rules}\label{sec:fsqr}

The notation $f[\mlambda] = f[\lambda_1,\ldots,\lambda_d]$ stands for the sum of evaluations of $f$ at the points of the fully symmetric set:
\begin{equation*}
f[\mlambda] \coloneqq \sum_{\mx \in [\mlambda]} f(\mx).
\end{equation*}
A \emph{fully symmetric quadrature rule} is a quadrature rule of the form
\begin{equation*}
Q(f) = \sum_{\mlambda\in\Lambda} w_{\mlambda} f[\mlambda] = \sum_{\mlambda \in \Lambda} w_{\mlambda} \sum_{\mx \in [\mlambda]} f(\mx),
\end{equation*}
where $\Lambda$ is a given finite collection of distinct generator vectors $\mlambda$. Such a rule uses only $\#\Lambda$ distinct weights, each corresponding to often a very large number of nodes. In \Cref{thm:main} we establish conditions under which a kernel quadrature rule is fully symmetric. This will yield significant computational savings because only $\#\Lambda$ (instead of $n = \sum_{\mlambda \in \Lambda} \#[\mlambda]$) distinct weights need to be computed.

Fully symmetric quadrature rules are prominent among classical polynomial quadrature rules, work on them going back to~\cite{Lyness1965a,McNameeStenger1967}. To the best of our knowledge, the most general and efficient constructions have been given by Genz~\cite{Genz1986} for the uniform distribution on a square and by Genz and Keister~\cite{GenzKeister1996} for Gaussians on the whole real space (a case studied also in~\cite{LuDarmofal2004}). See, for example, the review~\cite{Cools1997} for more examples and discussion on the highly related invariant theory. To achieve high algebraic order of precision, the classical fully symmetric quadrature rules rely on symmetry of the underlying measure and advantageous properties of polynomials when integrated with respect to such measures. In contrast to the kernel quadrature rules we are about to construct, the aforementioned rules do not permit free selection of the fully symmetric sets that are to be used.

Many of the popular sparse grid rules are also fully symmetric~\cite{NovakRitterSchmittSteinbauer1999,NovakRitter1999}. We make use of this useful fact in \Cref{sec:smolyak} for construction of sparse grid kernel quadrature rules whose weights can be computed efficiently.

\subsection{Fully symmetric kernels}\label{sec:fskernels}

We can now introduce the class of kernels that this article is concerned with as well as the necessary assumptions on the integration domain and measure.

\begin{definition} Suppose that $\mP\mx \in \Omega$ for any $\mx \in \Omega$ and any permutation and sign change matrix $\mP$. A kernel $k$ is \emph{fully symmetric} if $k(\mP\mx,\mP\mx') = k(\mx,\mx')$ for any such matrix $\mP$ and any $\mx,\mx' \in \Omega$.
\end{definition}

This class of kernels includes (i) isotropic kernels, (ii) products of an isotropic kernel $k_1$ of the form $k(\mx,\mx') = \prod_{i=1}^d k_1(\abs[0]{x_i-x_i'})$, and (iii) sums of an isotropic kernel $k_1$ of the form $k(\mx,\mx') = \sum_{i=1}^d k_1(\abs[0]{x_i-x_i'})$. Some polynomial kernels\footnote{Strictly speaking, these kernels are not positive-definite as the kernel matrix does not remain non-singular for any number of distinct points. See also \Cref{remark:posdef}.} of the form $k(\mx,\mx') = \sum_{i=1}^p P_i(\mx)P_i(\mx')$ for suitable multivariate polynomials $P_i$ are also fully symmetric. For example, the selection $P_1 \equiv 1$ and $P_i = x_{i-1}^2$ for $i=2,\ldots,p=d+1$ results in a fully symmetric kernel. See~\cite{SarkkaHartikainenSvenssonSandblom2016,KarvonenSarkka2017} for some results on how quadrature rules for such kernels are related to classical quadrature rules.

\begin{assumption}\label{ass:main} We assume that
\begin{enumerate}
\item[(i)] The integration domain $\Omega \subset \R^d$ is invariant under permutations and sign changes of coordinates of its elements. That is, $\Omega = \mP \Omega = \Set{ \mP\omega}{\omega \in \Omega}$ for any permutation and sign change matrix $\mP$.
\item[(ii)] The measure $\mu$ is fully symmetric in the sense that its density $f_\mu$ (w.r.t.\ the Lebesgue measure) is a fully symmetric function.
\item[(iii)] The kernel $k$ is positive-definite and fully symmetric.
\end{enumerate}
\end{assumption}
This assumption holds, for example, for $\Omega = [-1,1]^d$ equipped with the uniform measure and $\Omega = \R^d$ equipped with the Gaussian measure as well as for many other cases of interest. The numerical examples in \Cref{sec:num} are for these two cases.

\begin{lemma}\label{lemma:kmean} The kernel mean $k_\mu$ is fully symmetric under \Cref{ass:main}.
\end{lemma}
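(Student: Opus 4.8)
The plan is to verify the defining property of a fully symmetric function directly, i.e.\ to show that $\kmean(\mP\mx) = \kmean(\mx)$ for every $\mx \in \Omega$ and every permutation and sign change matrix $\mP$. First I would write out the definition of the kernel mean evaluated at the transformed point,
\begin{equation*}
\kmean(\mP\mx) = \int_\Omega k(\mP\mx, \mx') \dif\mu(\mx'),
\end{equation*}
and perform the change of variables $\mx' = \mP\my'$. The two facts that make this substitution harmless are that $\mP$ is orthogonal, so that $\mP^{-1} = \mP^\transpose$ is again a permutation and sign change matrix with $\abs{\det\mP} = 1$, and that by \Cref{ass:main}(i) the domain is invariant, $\mP^{-1}\Omega = \Omega$, so the region of integration does not change.

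Next I would handle the measure. Writing $\dif\mu(\mx') = f_\mu(\mx')\dif\mx'$ with $f_\mu$ the density with respect to Lebesgue measure, the substitution produces the Jacobian factor $\abs{\det\mP} = 1$ and the density $f_\mu(\mP\my')$, which equals $f_\mu(\my')$ by the full symmetry of $f_\mu$ assumed in \Cref{ass:main}(ii). Hence $\dif\mu(\mP\my') = \dif\mu(\my')$, and the integral becomes
\begin{equation*}
\kmean(\mP\mx) = \int_\Omega k(\mP\mx, \mP\my') \dif\mu(\my').
\end{equation*}
Finally I would invoke the full symmetry of the kernel from \Cref{ass:main}(iii), namely $k(\mP\mx, \mP\my') = k(\mx, \my')$, to conclude that the right-hand side equals $\int_\Omega k(\mx, \my')\dif\mu(\my') = \kmean(\mx)$, which is exactly what we need.

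I do not expect a serious obstacle: the statement is essentially a bookkeeping exercise that threads together the three parts of \Cref{ass:main}, with each part used exactly once (invariance of the domain, symmetry of the density, symmetry of the kernel). The only point deserving care is the justification of the change of variables at the level of the measure $\mu$ rather than its density---one should confirm that invariance of Lebesgue measure under the volume-preserving map $\mP$, combined with full symmetry of $f_\mu$, yields the push-forward identity $\mu(\mP^{-1}A) = \mu(A)$ for measurable $A$, i.e.\ that $\mu$ is genuinely $\mP$-invariant. Once this is established, the substitution and the kernel symmetry combine immediately to give the result.
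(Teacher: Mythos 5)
Your argument is correct and is essentially the paper's own proof: both perform the change of variables $\mx' \mapsto \mP\mx'$, use $\abs{\det\mP}=1$ together with the domain invariance and full symmetry of the density $f_\mu$ to leave the measure unchanged, and then invoke full symmetry of the kernel to conclude $\kmean(\mP\mx)=\kmean(\mx)$. Your extra remark about verifying $\mP$-invariance of $\mu$ at the level of the measure is a careful touch but adds nothing beyond what the paper's one-line computation already establishes.
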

\begin{proof} Let $\mx$ and $\mx'$ be elements of the same fully symmetric set. That is, $\mx' = \mP\mx$ for some permutation and sign change matrix $\mP$. A change of variables yields
\begin{equation*}
\int_\Omega k(\mx',\mz) f_\mu(\mz) \dif \mz = \int_\Omega \abs[0]{\det \mP}k\big(\mP\mx,\mP\mz) q\big(\mP\mz\big)\dif \mz = \int_\Omega k(\mx,\mz) f_\mu(\mz) \dif \mz,
\end{equation*}
where we have used the fact that $\abs[0]{\det \mP} = 1$. That is, $\kmean(\mx') = \kmean(\mx)$.
\end{proof}

\subsection{Fully symmetric kernel quadrature}\label{sec:fskq}

Let $[\mlambda^1], \ldots, [\mlambda^J]$ be distinct fully symmetric sets generated by $\mlambda^1,\ldots,\mlambda^J \in \R^d$. If the node set $\nodeset$ is the union \sloppy{${\nodeset = \cup_{j=1}^J [\mlambda^j]}$} of these fully symmetric sets, then a kernel quadrature rule using this node set is a fully symmetric quadrature rule in the sense of \Cref{sec:fsqr}. Furthermore, its $J$ distinct weights can be computed extremely efficiently when compared to naively solving the linear system~\eqref{eq:KQweights} of $\#\nodeset = n$ equations. This is formalised in the following theorem. \Cref{fig:reduction} illustrates the simplified weight computation process in the case of a node set that is a union of three fully symmetric sets.

\begin{figure}[t]
\centering
  \scalebox{0.5}{\includegraphics{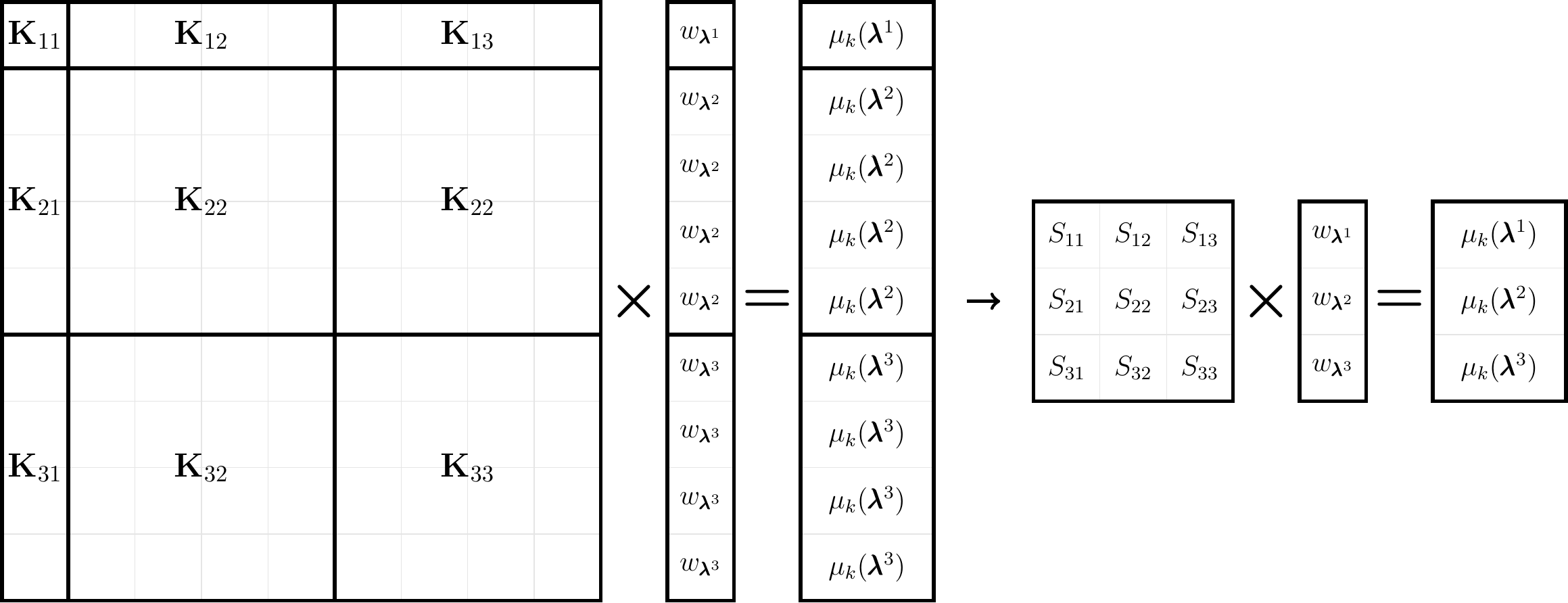}}
\caption{Illustration of \Cref{thm:main} for a node set that is a union of three fully symmetric sets: one containing one element and two containing four elements. All row sums of the matrices $\mK_{ij}$, defined in \Cref{eq:blockmatrix}, are equal to $S_{ij}$.}\label{fig:reduction}
\end{figure}

\begin{theorem}\label{thm:main} Suppose that $\Omega$, $\mu$, and $k$ satisfy \Cref{ass:main}. If the node set $\nodeset$ is a union of $J$ distinct fully symmetric sets $[\mlambda^1],\ldots,[\mlambda^J]$, then the kernel quadrature rule $Q_k$ is fully symmetric:
\begin{equation*}
Q_k(f) = \sum_{j=1}^J w_{\mlambda^j} f[\mlambda^j].
\end{equation*}
Furthermore, the $J$ weights $w_{\mlambda^1},\ldots,w_{\mlambda^J}$ corresponding to the fully symmetric sets can be solved from the non-singular linear system of $J$ equations
\begin{equation}\label{eq:reducedSystem}
\begin{pmatrix} S_{11} & \cdots & S_{1J} \\ \vdots & \ddots & \vdots \\ S_{J1} & \cdots & S_{JJ} \end{pmatrix} \begin{pmatrix} w_{\mlambda^1} \\ \vdots \\ w_{\mlambda^J}\end{pmatrix} = \begin{pmatrix} \kmean(\mlambda^1) \\ \vdots \\ \kmean(\mlambda^J)\end{pmatrix},
\end{equation}
where
\begin{equation*}
S_{ij} = \sum_{\mx\in[\mlambda^j]} k(\mx^i,\mx) \quad \text{for any} \quad \mx^i \in [\mlambda^i].
\end{equation*}
\end{theorem}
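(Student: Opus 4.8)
The plan is to exploit the block structure that the partition $\nodeset = \bigcup_{j=1}^J [\mlambda^j]$ induces on the kernel matrix. Ordering the nodes one fully symmetric set at a time, $\mK$ decomposes into blocks $\mK_{ij}$ of size $\#[\mlambda^i] \times \#[\mlambda^j]$ whose entries are $k(\mx,\mx')$ with $\mx \in [\mlambda^i]$ and $\mx' \in [\mlambda^j]$. The first and pivotal step is to show that every row of $\mK_{ij}$ has the same sum, namely $S_{ij}$; this is precisely the well-definedness asserted in the statement. To see it, fix $\mx^i, \tilde{\mx}^i \in [\mlambda^i]$. Because a fully symmetric set is a single orbit of the group of permutation and sign change matrices, $\tilde{\mx}^i = \mP\mx^i$ for some such $\mP$, and because $\mP$ maps $[\mlambda^j]$ bijectively onto itself I can reindex $\mx = \mP\my$ and invoke full symmetry of $k$ to get $\sum_{\mx \in [\mlambda^j]} k(\tilde{\mx}^i,\mx) = \sum_{\my \in [\mlambda^j]} k(\mP\mx^i, \mP\my) = \sum_{\my \in [\mlambda^j]} k(\mx^i, \my)$. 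Hence $S_{ij}$ does not depend on the chosen representative and $\mK_{ij}\mOne = S_{ij}\mOne$, where $\mOne$ is an all-ones vector of the length dictated by context.

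The step I expect to be the main obstacle is establishing that the reduced matrix $\mS = (S_{ij})$ is non-singular, since consistency of~\eqref{eq:reducedSystem} alone does not yield uniqueness. Here I would argue by lifting: given $\mc = (c_1,\ldots,c_J)$ with $\mS\mc = \mZero$, form $\mv \in \R^n$ that equals $c_j$ on every node of $[\mlambda^j]$. Using the row-sum identity block by block, the $i$-th block of $\mK\mv$ is $\sum_{j=1}^J \mK_{ij}(c_j\mOne) = \big(\sum_{j=1}^J S_{ij} c_j\big)\mOne = (\mS\mc)_i\mOne = \mZero$, so $\mK\mv = \mZero$. As $\mK$ is positive-definite and therefore non-singular, $\mv = \mZero$, which forces $\mc = \mZero$; thus $\mS$ is non-singular and~\eqref{eq:reducedSystem} has a unique solution $(w_{\mlambda^1},\ldots,w_{\mlambda^J})$.

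Finally I would identify this solution with the genuine kernel quadrature weights. Lifting $(w_{\mlambda^1},\ldots,w_{\mlambda^J})$ to the vector $\mw \in \R^n$ that is constantly $w_{\mlambda^j}$ on $[\mlambda^j]$, the same block computation gives, for the $i$-th block, $\sum_{j=1}^J \mK_{ij}(w_{\mlambda^j}\mOne) = \big(\sum_{j=1}^J S_{ij} w_{\mlambda^j}\big)\mOne = \kmean(\mlambda^i)\mOne$. By \Cref{lemma:kmean} the kernel mean is constant on $[\mlambda^i]$ with value $\kmean(\mlambda^i)$, since $\mlambda^i \in [\mlambda^i]$; hence this block coincides with the corresponding block of the right-hand side $\kmean(\nodeset)$ of~\eqref{eq:KQweights}. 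Therefore $\mK\mw = \kmean(\nodeset)$, and non-singularity of $\mK$ identifies $\mw$ as the unique optimal weight vector. Because $\mw$ is constant on each $[\mlambda^j]$, the rule is fully symmetric and $Q_k(f) = \sum_{j=1}^J w_{\mlambda^j} f[\mlambda^j]$, which completes the argument. An alternative route to the full symmetry of the weights is to note that both $\mK$ and, by \Cref{lemma:kmean}, the right-hand side are invariant under the node permutations induced by the symmetry group, so uniqueness of $\mK^{-1}\kmean(\nodeset)$ forces the weights to be constant on each orbit; the constructive argument above, however, delivers the reduced system at the same time.
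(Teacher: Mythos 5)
Your proposal is correct and follows essentially the same route as the paper: block-decompose $\mK$ by fully symmetric sets, show the row sums $S_{ij}$ are representative-independent using full symmetry of $k$ and the orbit structure, deduce non-singularity of $\mS$ by lifting a null vector to a null vector of the positive-definite $\mK$, and identify the lifted solution of~\eqref{eq:reducedSystem} with the unique solution of~\eqref{eq:KQweights} via \Cref{lemma:kmean}. The only cosmetic difference is that you reindex the sum directly through the bijection $\mx \mapsto \mP\mx$ of $[\mlambda^j]$, whereas the paper phrases the same fact as the rows of $\mK_{ij}$ being permutations of one another.
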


\begin{proof}
Let $\nodeset = \cup_{j=1}^J [\mlambda^j]$ be ordered such that all elements of a single fully symmetric set appear consecutively and the fully symmetric sets themselves are in ascending order in terms of their index $j$.

We denote $n^i = \#[\mlambda^i]$ and enumerate each fully symmetric set as \sloppy{${[\mlambda^i] = \{\mx_1^i, \ldots, \mx_{n^i}^i\}}$}. By \Cref{lemma:kmean}, the kernel mean is fully symmetric and, consequently, the kernel mean vector $\kmean(\nodeset) \in \R^n$, $n = n^1 + \cdots + n^J$, is
\begin{equation*}
\kmean(\nodeset) = \big( \kmean([\mlambda^1]), \ldots, \kmean([\mlambda^J]) \big),
\end{equation*}
where $\kmean([\mlambda^j]) = (\kmean(\mlambda^j), \ldots, \kmean(\mlambda^j)) \in \R^{n^j}$. That is, $\kmean(\nodeset)$ contains only $J$ distinct elements that occur in blocks of $n^j$. Consider then the kernel matrix $\mK$ that can be partitioned into $J^2$ submatrices $\mK_{ij}$ of dimensions $n^i \times n^j$, each containing all the kernel evaluations $k(\mx^i,\mx^j)$ for $\mx^i\in[\mlambda^i]$ and $\mx^j \in [\mlambda^j]$:
\begin{equation}\label{eq:blockmatrix}
\mK = \begin{pmatrix} \mK_{11} & \cdots & \mK_{1J} \\ \vdots & \ddots & \vdots \\ \mK_{J1} & \cdots & \mK_{JJ} \end{pmatrix}, \enspace \text{where} \enspace \mK_{ij} = \begin{pmatrix} k(\mx_1^i,\mx_1^j) & \cdots & k(\mx_1^i,\mx_{n^j}^j) \\ \vdots & \ddots & \vdots \\ k(\mx_{n^i}^i,\mx_1^j) & \cdots & k(\mx_{n^i}^i,\mx_{n^j}^j) \end{pmatrix}.
\end{equation}
Any row of any submatrix $\mK_{ij}$ can be obtained from any other of its rows by a permutation of elements of the row. To confirm this, consider any distinct rows $p, p' \leq n^i$ of $\mK_{ij}$. There exists a permutation and sign change matrix $\mP$ such that $\mx_{p'}^i = \mP\mx_p^i$ because fully symmetric sets are closed under such transformations. Note that $\mP$ is non-singular and its inverse $\mP^{-1}$ is also a permutation and sign change matrix. Since the kernel is fully symmetric, for any $l \leq n^j$ we have
\begin{equation*}
k\big(\mx_p^i, \mx_l^j\big) = k\big(\mP^{-1} \mx_{p}^i, \mP^{-1} \mx_l^j\big) = k\big(\mx_{p'}^i, \mP^{-1} \mx_l^j\big),
\end{equation*}
where $\mP^{-1} \mx_l^j \in [\mlambda^j]$. This means that for every $l$ there is an element on the row $p'$ that equals the $l$th element of the $p$th row. That is, the rows are permutations of each other. Consequently, the row sums $S_{ij} \coloneqq \sum_{\mx \in [\mlambda^j]} k(\mx^i,\mx)$ of $\mK_{ij}$ do not depend on $\mx^i \in [\mlambda^i]$.

Consider the $J \times J$ matrix $[\mS]_{ij} = S_{ij}$ composed of the row sums defined above. Then the equation $\mS\ma = \mb$ for some vectors $\ma,\mb \in \R^J$ implies that
\begin{equation*}
\begin{pmatrix} \mK_{11} & \cdots & \mK_{1J} \\ \vdots & \ddots & \vdots \\ \mK_{J1} & \cdots & \mK_{JJ} \end{pmatrix} \begin{pmatrix} \ma_1 \\ \vdots \\ \ma_J \end{pmatrix} = \begin{pmatrix} \mb_1 \\ \vdots \\ \mb_J \end{pmatrix},
\end{equation*}
where $\ma_i = (a_i, \ldots, a_i) \in \R^{n^i}$ and $\mb_i = (b_i,\ldots,b_i) \in \R^{n^i}$, because
\begin{equation*}
b_i = \sum_{j=1}^J a_j S_{ij} = \sum_{j=1}^J a_j \sum_{l=1}^{n^j} [\mK_{ij}]_{pl} = \sum_{j=1}^J a_j \sum_{l=1}^{n^j} k\big( \mx_p^i, \mx_l^j \big)
\end{equation*}
for every $p \leq n^i$. The matrix $\mS$ is non-singular, for if it were singular there would exist a non-zero vector $\ma \in \R^J$ such that $\mS\ma = \mZero$. But by the above argument this would imply that $\mK$ is singular which is not the case because the kernel $k$ is positive-definite. All this implies that if $(w_{\mlambda^1},\ldots,w_{\mlambda^J})$ is the unique solution to the linear system of equations~\eqref{eq:reducedSystem}, then 
\begin{equation*}
\mw = ( \mw_{\mlambda^1}, \ldots, \mw_{\mlambda^J}) \in \R^n, \hspace{0.5cm} \text{where} \hspace{0.5cm} \mw_{\mlambda^j} = (w_{\mlambda^j}, \ldots, w_{\mlambda^j} ) \in \R^{n^j},
\end{equation*}
must be the solution to $\mK\mw = \kmean(\nodeset)$. That is, weights for nodes in each fully symmetric set are equal and the kernel quadrature rule is fully symmetric. This concludes the proof.
\end{proof}

\begin{remark}\label{remark:posdef} \Cref{thm:main} also applies to kernels whose kernel matrix is positive-definite only for every collection of $m \leq p$ distinct points for some $p > 0$ if the total number $n$ of nodes does not exceed $p$. The polynomial kernels briefly mentioned in \Cref{sec:fskernels} are examples of such kernels.
\end{remark}

The full algorithm for fully symmetric kernel quadrature is presented in high-level pseudocode in Algorithm 1 below. We expect that $J$, the number of fully symmetric sets, is rarely more than a few hundred (the example in \Cref{sec:exapriori} has $J = 832$ but this results in \sloppy{${n \approx 15,000,000}$}) so solving the weights from the linear system~\eqref{eq:reducedSystem} is not a computational bottleneck. Instead, it is usually the $Jn$ kernel evaluations that take the most time.

\begin{algorithm}
\vspace{0.1cm}
\noindent \textbf{Algorithm 1} Fully symmetric kernel quadrature
\vspace{0.1cm}
\hrule
\vspace{0.1cm}
\noindent \emph{Construct the fully symmetric sets}
\begin{enumerate}
\item Select $J$ distinct generator vectors $\mlambda^j \in \R^d$ with non-negative elements.
\item[] \textbf{For each} $j=1,\ldots,J$ construct the fully symmetric set $[\mlambda^j]$:
\begin{enumerate}
\item[2.] Sort $\mlambda^j$ in descending order.
\item[3.] Identify the unique non-zero elements $\muu \in \R^{d_\muu}$, $d_\muu \leq d$, of $\mlambda^j$ and their multiplicities $\mm \in \N^{d_\muu}$. Denote $\Sigma_\mm = \sum_{l=1}^{d_\muu} m_l$. That is,
\begin{equation*}
\mlambda^j = \big(\widetilde{\muu}_1, \ldots, \widetilde{\muu}_{d_\muu},\mZero_{(d - \Sigma_\mm) \times 1}\big) \in \R^d,
\end{equation*}
where $\widetilde{\muu}_l = (u_l,\ldots,u_l) \in \R^{m_l}$ for $l=1,\ldots,d_\muu$.
\item[4.] Construct all $d_\ma$ possible vectors $\ma^i \in \N^{d_\muu}$ such that $a_l^i \leq m_l$ for each $l=1,\ldots,d_\muu$.
\item[5.] Set $[\mlambda^j] = \emptyset$.
\item[] \textbf{For each} $i = 1,\ldots,d_\ma$:
\begin{enumerate}
\item[6.] Construct the vector
\begin{equation*}
\mlambda^j_i = \big(\widetilde{\muu}_1^s, \ldots, \widetilde{\muu}_{d_\muu}^s,\mZero_{(d - \Sigma_\mm) \times 1}\big) \in \R^d,
\end{equation*}
where, for each $l=1,\ldots,d_\muu$, the first $a_l^i$ elements of $\widetilde{\muu}_l^s \in \R^{m_l}$ are $-u_l$ and the rest are $u_l$. The vector $\mlambda^j_i$ essentially corresponds to one possible sign combination $(s_1 \lambda_1^j,\ldots, s_d \lambda_d^j)$ in \Cref{eq:FSS}.
\item[7.] Compute the collection $U$ of all unique permutations of $\mlambda^j_i$ and append it to the fully symmetric set: $[\mlambda^j] = [\mlambda^j] \cup U$.
\end{enumerate}
\end{enumerate}
\end{enumerate}
\noindent\emph{Compute the kernel quadrature weights}
\begin{enumerate}
\item[8.] Construct an empty matrix $\mS \in \R^{J \times J}$.
\item[] \textbf{For each} $(i,j) \in \{1,\ldots,J\}^2$:
\begin{enumerate}
\item[9.] Select any $\mx^i \in [\mlambda^i]$ and set $[\mS]_{ij} = \sum_{\mx \in [\mlambda^j]} k(\mx^i, \mx)$.
\end{enumerate}
\item[10.] Solve the $J$ distinct weights $\mw_{\mlambda} = (w_{\mlambda^1},\ldots,w_{\mlambda^J})$ from the linear system of equations $\mS \mw_{\mlambda} = \mb$, where $b_l = \kmean(\mlambda^l)$.
\end{enumerate}
\noindent\emph{Compute the quadrature approximation}
\begin{enumerate}
\item[11.] Compute $Q_k(f) \approx \int_\Omega f \dif \mu$ as
\begin{equation*}
Q_k(f) = \sum_{j=1}^J w_{\mlambda^j} f[\mlambda^j] = \sum_{j=1}^J w_{\mlambda^j} \sum_{\mx \in [\mlambda^j]} f(\mx).
\end{equation*}
\end{enumerate}
\vspace{0.1cm}
\end{algorithm}

\section{Selection of the fully symmetric sets}\label{sec:fsssel}

This section presents three different approaches for constructing the node set as a union of fully symmetric sets. Of these the sparse grids of \Cref{sec:smolyak} are the most promising alternative. We also discuss convergence properties of some of the kernel quadrature rules we construct in \Cref{sec:convergence}. We expect there to exist many other competitive schemes as one of the main advantages of fully symmetric kernel quadrature is that there are no restrictions in selecting the generator vectors.

\subsection{Random generators}\label{sec:fskmcq}

Arguably, the simplest approach, both conceptually and algorithmically, is to draw a number of generator vectors randomly from the underlying distribution. However, unless additional constraints are enforced, all the generators will be distinct and non-zero, resulting in unrealistic numbers of integrand evaluations needed if $d \geq 6$, as seen from \Cref{table:RGsize}. One could heuristically set some generators to zero to reduce the number of nodes but it is not entirely clear how this should be done. In any case, for at least $d < 6$, the random generator approach seems realistic. We call this method the \emph{fully symmetric kernel Monte Carlo} (FSKMC).

\Cref{thm:RGconv} provides theoretical convergence guarantees and \Cref{sec:exrand} demonstrates that the FSKMC can also work in practice. Nevertheless, the method does not seem very promising as it comes across that a large number of random generator vectors, and thus an even larger number of nodes, is required to capture the underlying distribution.

This approach bears some similarity to stochastic radial and spherical integration rules developed in~\cite{GenzMonahan1998,GenzMonahan1999}. These rules are less flexible due to the usual constraints of integrating low-degree polynomials exactly and more involved in their implementation.

\subsection{Sparse grids}\label{sec:smolyak}

An iterated quadrature rule of degree $m$ based on a regular Cartesian product grid requires $m^d$ nodes---a number that quickly becomes impractically large. Sparse grids that originate in the work of Smolyak~\cite{Smolyak1963} are ``sparsified'' product sets widely used in numerical integration~\cite{NovakRitter1996,GerstnerGriebel1998,NovakRitter1999,NovakRitterSchmittSteinbauer1999}. See also the general survey by Bungartz and Griebel~\cite{BungartzGriebel2004} and~\cite{Holtz2011} for a wealth of financial applications. Recently, Oettershagen~\cite{Oettershagen2017} has shown that the standard approach to sparse grids is also applicable to fast computation of the weights of kernel quadrature rules. This approach is different from ours, that is based on identifying the fully symmetric sets a sparse grid is a union of, and specific to sparse grids. Other sparse grid based kernel methods appear in~\cite{Garcke2006,GeorgoulisLevesleySubhan2013,DongGeorgoulisLevesleyUsta2015,UstaLevesley2017}. The construction of sparse grids that we present in this section is not the most general possible as we work in the fully symmetric framework. More general constructions are contained in some of the aforementioned references. We assume that $\Omega = [-a,a]^d$ for a possibly infinite $a > 0$.

Let $X^1 = \{0\}$ and $X^i \subset X^{i+1} \subset [-a,a]$ for $i > 1$ be finite, nested and symmetric (i.e.\ if $x \in X^i$, then $-x \in X^i$) point sets. Then the \emph{sparse grid} of \emph{level} $q \geq 1$ is the set  
\begin{equation*}
H(q,d) \coloneqq \bigcup_{\abs[0]{\alpha} = d+q} \big( X^{\alpha_1} \times \cdots \times X^{\alpha_d} \big),
\end{equation*}
where $\alpha \in \N^d$ is a $d$-dimensional multi-index with the elements $\alpha_i = \alpha(i)$ and \sloppy{${\abs[0]{\alpha} = \alpha_1 + \cdots + \alpha_d}$}. Note that the largest $X^i$ that is needed for a sparse grid of level $q$ is $X^{q+1}$. As the basis sets $X^i$ are nested and symmetric, it is fairly easy to see that $H(q,d)$ is union of fully symmetric sets and can be explicitly written so:
\begin{align*}
H(q,d) &= \bigcup_{\substack{\abs[0]{\alpha} = d + q \\ \alpha_i \geq \alpha_{i+1}}} \, \bigcup_{\mq \in \Pi_d} \big( X^{\alpha(q_1)} \times \cdots \times X^{\alpha(q_d)} \big) \\
&= \bigcup_{\substack{\abs[0]{\alpha} = d + q \\ \alpha_i \geq \alpha_{i+1}}} \, \bigcup_{\mq \in \Pi_d} \bigcup_{\ms \in S_d} \, \bigcup_{\substack{ \mlambda \in \R^d \\ \lambda_j \in X^{\alpha(q_j)} \\ \lambda_j \geq 0}} \big\{ (s_1 \lambda_1, \ldots, s_d \lambda_d) \big\} \\
&= \bigcup_{\substack{\abs[0]{\alpha} = d + q \\ \alpha_i \geq \alpha_{i+1}}} \Set[\big]{[\lambda_1, \ldots, \lambda_d] }{\lambda_j \in X^{\alpha_j} \text{ and } \lambda_j \geq 0 \text{ for } j = 1,\ldots,d },
\end{align*}
where the restriction $\alpha_1 \geq \alpha_{i+1}$ eliminates a large number of permutations that would be otherwise duplicated when generating fully symmetric sets. That is, \Cref{thm:main} applies to sparse grids. We call the resulting kernel quadrature rules the \emph{sparse grid kernel quadrature rules}.

We are left with selection of the nested point sets $X^i$. In polynomial-based sparse grid quadrature rules these sets come coupled with univariate quadrature rules whose weights are used to construct the final sparse grid weights but we are under no such restrictions. An obvious idea for selecting $X^i$ would be to sequentially minimise the worst-case error in one dimension---provided that the kernel is one for which this makes sense, for example any of the three examples of fully symmetric kernels given in \Cref{sec:fskernels}. Discussion on different sequential kernel quadrature methods (usually known as \emph{sequential Bayesian quadratures}) can be found in for example~\cite{CookClayton1998,HuszarDuvenaud2012,Gunter2014,BriolOatesGirolamiOsborne2015}. A different approach appears in~\cite{Oettershagen2017}.

However, owing to difficulties in setting the kernel length-scale, we do not employ this selection scheme. Instead, we use (i) the Clenshaw--Curtis point sets, rather standard in sparse grid literature, for $\Omega = [-1,1]^d$ and the uniform measure and (ii) nested sets formed out of Gauss--Hermite nodes in the Gaussian case:
\begin{enumerate}
\item[(i)] For $i > 1$ and $m_i = 2^{i-1} + 1$, the nested Clenshaw--Curtis sets are
\begin{equation*}
X^i = \{x_1^i,\ldots,x_{m_i}^i\} \quad \text{with} \quad x_j^i = -\cos \bigg(\frac{\pi(j-1)}{m_i - 1}\bigg) \in [-1,1].
\end{equation*}
The points $x_j^i$ are the roots and extrema of Chebyshev polynomials. The corresponding sparse grid kernel quadrature rule is called \emph{Clenshaw--Curtis sparse grid kernel quadrature} (CCSGKQ). Numerical results for this kernel quadrature are given in \Cref{sec:exapriori} and convergence for sufficiently smooth functions is the topic of \Cref{thm:SGconv}.
\item[(ii)] In the \emph{Gauss--Hermite sparse grid kernel quadrature} (GHSGKQ) we use the classical Gauss--Hermite nodes that are the roots of the Hermite polynomials
\begin{equation*}
\her_p(x) = (-1)^p \exp\big(x^2/2\big) \od[p]{}{x} \exp\big(\!-x^2/2\big).
\end{equation*}
Given a level $q$, we generate the $2q+1$ symmetric roots of $\her_{2q+1}$ and for \sloppy{${i=1,\ldots,q+1}$} select
\begin{equation*}
X^i = \text{\,the $2i - 1$ smallest roots by absolute value.}
\end{equation*}
The number of nodes, in terms of the level $q$, grows significantly slower than with Clenshaw--Curtis sparse grids. A numerical experiment involving a financial problem is given in \Cref{sec:zcb}. As is usual for quadrature rules on the whole of $\R^d$, there are no theoretical convergence guarantees for the GHSGKQ. Because $H(q,d)$ is not a subset of $H(q+1,d)$ for the Gauss--Hermite points, these grids are only suitable for cases where the number of nodes that can be used is determined beforehand based on for example the computational budget available. Note that these grids are completely different from several other sparse grids in the literature that use nodes of Gaussian quadrature rules.
\end{enumerate}

\begin{figure}[t]
\centering
  \begin{subfigure}[b]{0.44\textwidth}
  \centering
  \includegraphics{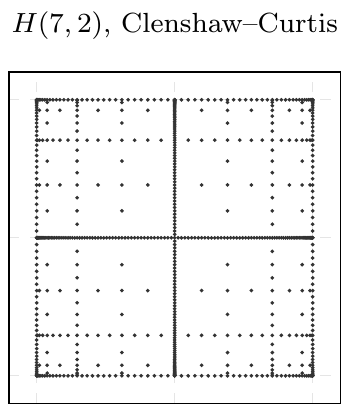}
  \end{subfigure}
  \begin{subfigure}[b]{0.44\textwidth}
  \centering
  \includegraphics{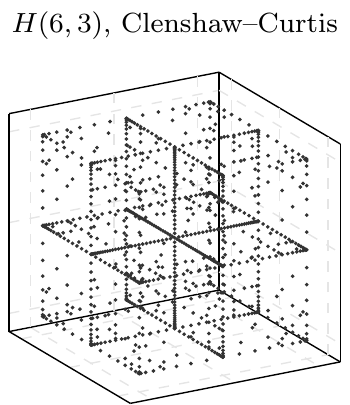}
  \end{subfigure}
  \begin{subfigure}[b]{0.44\textwidth}
  \centering
  \vspace{0.3cm}
  \includegraphics{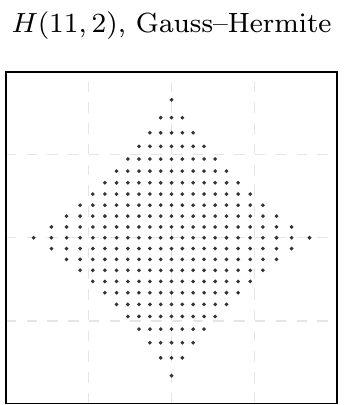}
  \end{subfigure}
  \begin{subfigure}[b]{0.44\textwidth}
  \centering
  \includegraphics{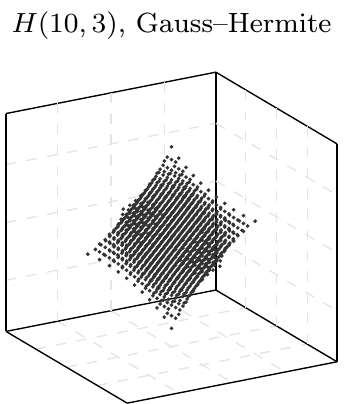}
  \end{subfigure}
  \caption{Examples of sparse grids with Clenshaw--Curtis and Gauss--Hermite nodes. The numbers of nodes in the sparse grids are 705 (upper left), 1,073 (upper right), 265 (lower left), and 1,561 (lower right). Compare these to the cardinalities of the corresponding full grids that are $129^2=16{,}641$; $65^3=274{,}625$; $23^2 = 529$; and $21^3=9{,}261$.}\label{fig:sparsegrids}
\end{figure}

Four sparse grids based on these two point sequences are depicted in \Cref{fig:sparsegrids}. There is a large array of other possibilities available in the literature. For example, Gerstner and Griebel~\cite{GerstnerGriebel1998} use Gauss--Patterson nodes and Genz and Keister~\cite{GenzKeister1996} have developed a nested version of the Gauss--Hermite rule. The rule (ii) can also be trivially extended for other integration domains and measures if a different sequence of orthogonal polynomials is used (e.g.\ Legendre or Chebyshev polynomials on $[-1,1]$).

\subsection{Worst-case error minimisation with respect to the generators}\label{sec:wcemin}

The third methodology for choosing the fully symmetric sets is that of principled worst-case error minimisation. Suppose that one, based on for example the number of nodes desired, fixes a number of generators of a fully symmetric kernel quadrature rule to zero or sets some equality constraints. Then the worst-case error $e(Q_k)$ of the kernel quadrature rule can be minimised with respect to the generator vectors obeying these constraints. Especially in higher dimensions, this is a task vastly simpler than trying to minimise the error over a node set of unconstrained geometry. Optimal kernel quadrature rules under certain structural constraints have been previously experimented with at least by O'Hagan~\cite{OHagan1991,OHagan1992}.

As a simplistic and somewhat arbitrary example, suppose that one desires a good fully symmetric kernel quadrature rule having about 80 nodes in $\Omega \subset \R^3$. A rule of the form
\begin{equation}\label{eq:fskq-optimisation}
Q_k^{\mlambda}(f) = w_1 f[0,0,0] + w_2 f[\lambda_1,\lambda_2,\lambda_3] + w_3f[\lambda_3,\lambda_3,\lambda_3] + w_4 f[\lambda_4,\lambda_4,\lambda_5]
\end{equation}
has $1+48+8+24 = 81$ nodes if the generators $\mlambda = (\lambda_1,\ldots,\lambda_5)$ are distinct and non-zero. Optimal generators $\mlambda^* = (\lambda^*_1,\lambda_2^*,\lambda_3^*,\lambda_4^*,\lambda_5^*)$ in the sense of minimal worst-case error would then be
\begin{equation*}
\mlambda^* = \argmin_{\mlambda \in \Omega, \lambda_i > 0} e(Q_k^{\mlambda}).
\end{equation*}
That is, $Q_k^{\mlambda^*}$ has the smallest WCE among all rules of the form~\eqref{eq:fskq-optimisation}. The optimal generators cannot be in general solved analytically nor does this minimisation problem appear to be convex.

In some very simple cases minimisation is trivial. Consider rules of the form
\begin{equation*}
Q_k^\lambda (f) = w_1 f[0,\ldots,0] + w_2 f[\lambda,0,\ldots,0]
\end{equation*}
in $\Omega \subset \R^d$. If the kernel is Gaussian with length-scale $\ell$ and $\mu$ the standard Gaussian measure (see \Cref{sec:closedformkmean}), the task of finding the optimal generator $\lambda^*$ reduces to
\begin{equation*}
\lambda^* = \argmin_{\lambda > 0} e(Q_k^\lambda) = \argmax_{\lambda > 0}\Bigg[\frac{\neper^{-\lambda^2/2(1+\ell^2)} - \neper^{-\lambda^2/2\ell^2}}{1 - \neper^{-\lambda^2/\ell^2}}\Bigg].
\end{equation*}
That is, the optimal generator is dimension-independent and can be easily computed. However, with increasing dimension and constant length-scale this results to a negative weight for the origin which often impairs numerical stability. It is somewhat questionable if such a rule is actually ``good'' (removing the origin of course yields positive weights).

We do not attempt to construct efficient fully symmetric rules using the technique described above in this article. The topic, alongside with node selection for sparse grids via sequential WCE minimisation briefly discussed in \Cref{sec:smolyak}, is left for future research.

\subsection{Convergence analysis}\label{sec:convergence}

In this section we provide convergence theorems for the fully symmetric kernel Monte Carlo and the Clenshaw--Curtis sparse grid kernel quadrature. The theorems are straightforward corollaries of some well-known results in the literature. For stating the results, we need to introduce the following three standard function classes. We assume that $\Omega = [-a,a]^d$ and $a < \infty$ in this section. The general principle on the convergence results for kernel quadrature is that the rates obtained are at least as good as those for any other method using the same nodes if the integrand belongs to the RKHS induced by the kernel. This should be quite clear from the definition of kernel quadrature.

With $\alpha \in \N^d$ a multi-index and $f \colon \Omega \to \R$ a sufficiently smooth function, the derivative operator is $\difop^\alpha f = \partial^{\abs[0]{\alpha}}f / \partial x_1^{\alpha_1} \cdots \partial x_d^{\alpha_d}$. By $\alpha \leq r$ we mean that \sloppy{${\alpha_1,\ldots,\alpha_d \leq r}$}. The function classes we need are
\begin{enumerate}
\item[(i)] The Sobolev space $\sobolev_2^r$ is a Hilbert space defined as
\begin{equation*}
\sobolev_2^r \coloneqq \Set[\big]{ f \in L^2(\mu) }{ \difop^\alpha f \in L^2(\mu) \text{ exists for all } \abs[0]{\alpha} \leq r}
\end{equation*}
with the norm $\norm[0]{f}_{W^r_2} = \sum_{\abs[0]{\alpha} \leq r} \norm[0]{\difop^\alpha f}_{L^2(\mu)}$.
\item[(ii)] The class $C^r$ is the class of functions that have bounded derivatives:
\begin{equation*}
C^r \coloneqq \Set[\big]{f \colon \Omega \to \R}{\norm[0]{\difop^\alpha f}_\infty < \infty \text{ for all } \abs[0]{\alpha} \leq r}.
\end{equation*}
This space is equipped with the norm $\norm[0]{f}_{C^r} = \max\Set{\norm[0]{\difop^\alpha f}_\infty}{\abs[0]{\alpha} \leq r}$.
\item[(iii)] The class $F^r$ is the class of functions that have bounded mixed derivatives:
\begin{equation*}
F^r \coloneqq \Set[\big]{f \colon \Omega \to \R}{\norm[0]{\difop^\alpha f}_\infty < \infty \text{ for all } \alpha \leq r}.
\end{equation*}
This space is equipped with the norm \sloppy{${\norm[0]{f}_{F^r} = \max\Set{\norm[0]{\difop^\alpha f}_\infty}{\alpha \leq r}}$}.
\end{enumerate}
For relations of the above function classes to reproducing kernel Hilbert spaces induced by different kernels, see for example~\cite[Chapter 10]{Wendland2005}. Two norms $\norm[0]{\cdot}_1$ and $\norm[0]{\cdot}_2$ of an arbitrary vector space are \emph{norm-equivalent} if there are positive constants $C_1$ and $C_2$ such that $C_1\norm[0]{x}_1 \leq \norm[0]{x}_2 \leq C_2\norm[0]{x}_1$ for all elements $x$ of the vector space. Recall from \Cref{sec:kernelquad} that $\rkhs$ is the RKHS induced by the kernel~$k$. Then $\rkhs$ is norm-equivalent to the Sobolev space $\sobolev_2^r$ if $r > d/2$ and the Fourier transform $\mathcal{F}(\omega)$ of the kernel $k$ decays at the rate $(1 + \norm[0]{\omega}^2)^{-r}$. This holds if the kernel is for example of the Matérn class with $\nu \geq r - 1/2$. In a similar manner, $\rkhs$ is norm-equivalent to $F^r$ if the kernel is a product of one-dimensional Matérn kernels.

The following convergence theorems are simple consequences of results available in the literature. In these theorems $Q_{k,n}$ stands for an $n$-point kernel quadrature rule with its type specified by a superscript. Extensions to the misspecified setting analysed in~\cite{KanagawaSriperumbudurFukumizu2016,KanagawaSriperumbudurFukumizu2017} may be possible. 

\begin{theorem}[Convergence of  FSKMC]\label{thm:RGconv} Let $\Omega = [-a, a]^d$ with $a < \infty$. If $\rkhs$ is norm-equivalent to the Sobolev space $\sobolev_2^r$ with $r > d/2$, then
\begin{equation}\label{eq:MCrate}
\expec \big[e(Q_{k,n}^{\textsm{FSKMC}})\big] = \bigO\big(n^{-r/d + \epsilon}\big)
\end{equation}
for any $\epsilon > 0$. The expectation above is with respect to the joint distribution of the random generator vectors.
\end{theorem}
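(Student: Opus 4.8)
The plan is to reduce the statement to two standard facts: a deterministic bound on the worst-case error in terms of the \emph{fill distance} of the node set, and a probabilistic bound on that fill distance. Recall that the kernel quadrature rule is, by construction, optimal among all rules using the same nodes, so it suffices to bound its worst-case error from above by any convenient quantity. Write the fill distance of $\nodeset$ as $h_{\nodeset} \coloneqq \sup_{\mx \in \Omega} \min_{\mx_i \in \nodeset} \norm[0]{\mx - \mx_i}$. Because $\rkhs$ is norm-equivalent to $\sobolev_2^r$ with $r > d/2$ and $\Omega = [-a,a]^d$ is a bounded cube, hence satisfies an interior cone condition with Lipschitz boundary, the standard sampling inequalities for scattered data in Sobolev spaces (see, e.g., \cite[Chapter 11]{Wendland2005}) provide a constant $C > 0$ and a threshold $h_0 > 0$ such that
\begin{equation*}
e(Q_{k,n}) \leq C\, h_{\nodeset}^{\,r} \quad \text{whenever} \quad h_{\nodeset} \leq h_0.
\end{equation*}
The exponent $r$, rather than the $r - d/2$ appearing in $L^\infty$ function approximation, reflects that an integral functional is being approximated; the norm-equivalence transfers the Sobolev bound to $\rkhs$ up to a constant.

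Second, I would control $\expec[h_{\nodeset}^{\,r}]$. The node set is \emph{not} a collection of i.i.d.\ points, since the points inside a single $[\mlambda^j]$ are deterministic images of the generator $\mlambda^j$ under the permutation and sign-change matrices. The symmetry is precisely what renders this harmless. Because $\Omega$ and $\mu$ are fully symmetric (\Cref{ass:main}) and the group acts by isometries, the nearest-node distance is a symmetric function of $\mx$, so the fill distance over $\Omega$ equals the fill distance over a fundamental domain $\Omega_0$ (the points of $\Omega$ with non-negative, descending coordinates), on which the generators behave like i.i.d.\ draws from the restriction of $\mu$. If $J$ generators are used, then $n \leq 2^d d!\, J$, so $J \geq c_d\, n$ for a dimension-dependent constant $c_d > 0$. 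Provided the density of $\mu$ is bounded away from zero on $\Omega$, the classical covering estimate for i.i.d.\ samples yields
\begin{equation*}
\expec\big[h_{\nodeset}^{\,r}\big] = \bigO\!\left( \Big(\frac{\log J}{J}\Big)^{r/d} \right) = \bigO\!\left( \Big(\frac{\log n}{n}\Big)^{r/d} \right),
\end{equation*}
where the moment, rather than merely $(\expec\, h_{\nodeset})^r$, is obtained from the exponential tail bound for the fill distance of random coverings, so that the convex case $r \geq 1$ causes no difficulty.

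Combining the two displays and absorbing the logarithmic factor into the error exponent gives $\expec[e(Q_{k,n}^{\textsm{FSKMC}})] = \bigO((\log n / n)^{r/d}) = \bigO(n^{-r/d + \epsilon})$ for every $\epsilon > 0$, as claimed; the threshold condition $h_{\nodeset} \leq h_0$ holds for all sufficiently large $n$ and affects only finitely many terms. I expect the main obstacle to be the second step: arguing that the strong correlations within each fully symmetric set do not degrade the fill-distance rate. The resolution is to work in the fundamental domain $\Omega_0$, where full symmetry of $\mu$ makes the generators genuine i.i.d.\ samples and the factor $2^d d!$ relating $J$ to $n$ is a dimension-dependent constant that is absorbed into the rate, leaving the exponent in $n$ unchanged.
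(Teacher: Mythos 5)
Your argument is correct, but it takes a genuinely more self-contained route than the paper. The paper's proof is a two-step reduction: since the worst-case error of the \emph{optimal} rule can only decrease when nodes are added, $e(Q_{k,n}^{\textsm{FSKMC}})$ is bounded by the WCE of the kernel quadrature rule that uses only the $J$ generator vectors, which are i.i.d.\ draws from $\mu$; the rate $\bigO(J^{-r/d+\epsilon})$ is then quoted directly from the known kernel Monte Carlo result of Briol et al., and $J \asymp n$ up to the factor $2^d d!$ finishes the proof. What you do instead is essentially re-derive that cited result from first principles: the sampling inequality $e(Q_{k,n}) \leq C h_{\nodeset}^{\,r}$ for $W_2^r$-equivalent spaces, plus the $\bigO((\log J/J)^{r/d})$ moment bound on the fill distance of (effectively) i.i.d.\ samples. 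Your step (2), reducing the fill distance to a fundamental domain, is valid but more elaborate than necessary: each $[\mlambda^j]$ contains the original unfolded draw $\mx^j \sim \mu$, so $\nodeset$ contains $J$ i.i.d.\ points of $\Omega$ and $h_{\nodeset} \leq h_{\{\mx^j\}}$ holds directly, with no appeal to the group action; this is the same observation that lets the paper discard the symmetric copies wholesale. Your route buys explicitness (it makes visible where the hypothesis that the density of $\mu$ is bounded away from zero enters --- a hypothesis the theorem statement leaves implicit but which the cited KMC result also requires) and it would extend to rules with non-optimal weights, whereas the paper's route is shorter precisely because it exploits optimality of the kernel quadrature weights and outsources the fill-distance machinery to the citation. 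Your handling of $\expec[h^r]$ versus $(\expec h)^r$ and of the threshold $h_{\nodeset} \leq h_0$ is careful and correct; neither issue is addressed explicitly in the paper because both are absorbed into the cited theorem.
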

\begin{proof} The worst-case error is decreasing in the number of nodes so we know that $e(Q_{k,n}^{\textsm{FSKMC}}) \leq e(Q_{k,n}^{\textsm{GEN}})$ where the rule $Q_{k,n}^{\textsm{GEN}}$ uses only the $J$ generator vectors as its nodes. The rate~\eqref{eq:MCrate} is realised by the regular kernel Monte Carlo~\mbox{\cite[Theorem 1]{BriolOatesGirolamiOsborneSejdinovic2016}} under the norm-equivalence assumption. Therefore,
\begin{equation*}
\expec \big[e(Q_{k,n}^{\textsm{FSKMC}})\big] \leq \expec \big[e(Q_{k,n}^{\textsm{GEN}})\big] = \bigO\big(J^{-r/d + \epsilon}\big) = \bigO\big(n^{-r/d + \epsilon}\big)
\end{equation*}
because there is a dimension-dependent upper bound $2^d d!$ (see \Cref{eq:FSScardinality}) for the number of nodes one fully symmetric set can contain.
\end{proof}

It is clear that the above rate is extremely crude with respect to $d$ as the dimension also enters through the multiplicative factor $2^d d!$. It is likely that this factor can be eliminated or diminished with more careful analysis.
 
\begin{theorem}[Convergence of CCSGKQ]\label{thm:SGconv} Let $\Omega = [-a, a]^d$ with $a < \infty$ and $\mu$ be the uniform measure on $\Omega$. If $\rkhs$ is norm-equivalent to $C^r$, then 
\begin{equation}\label{eq:Cbound}
e(Q_{k,n}^\textsm{CCSGKQ}) = \bigO\big(n^{-r/d} (\log n)^{(d-1)(r/d+1)}\big).
\end{equation}
If $\rkhs$ is norm-equivalent to $F^r$, then
\begin{equation}\label{eq:Fbound}
e(Q_{k,n}^\textsm{CCSGKQ}) = \bigO\big(n^{-r} (\log n)^{(d-1)(r+1)}\big).
\end{equation}
\end{theorem}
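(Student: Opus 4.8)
The theorem asserts two convergence rates for the Clenshaw--Curtis sparse grid kernel quadrature, one under norm-equivalence of $\rkhs$ to $C^r$ and one under norm-equivalence to $F^r$. The guiding principle, already stated in the excerpt, is that a kernel quadrature rule is optimal in the RKHS among all rules using the same nodes. So my plan is to \emph{not} analyse the kernel rule directly at all. Instead, I would invoke a known polynomial-based sparse grid rule---the classical Clenshaw--Curtis Smolyak rule on the \emph{same} node set $H(q,d)$---whose worst-case (or quadrature) error rates over $C^r$ and $F^r$ are available in the literature (the natural references here are Novak and Ritter~\cite{NovakRitter1996} and the surrounding sparse grid works~\cite{NovakRitter1999,NovakRitterSchmittSteinbauer1999,BungartzGriebel2004}). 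Call that competing rule $Q_n^{\textsm{CC}}$.

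The key comparison step is this. By the optimality of $Q_k$, for any integrand $f$ in the unit ball of $\rkhs$ we have $\abs[0]{\mu(f) - Q_{k,n}^{\textsm{CCSGKQ}}(f)} \leq \abs[0]{\mu(f) - Q_n^{\textsm{CC}}(f)}$, because the kernel rule minimises the error over all rules sharing its nodes. Taking the supremum over the unit ball gives $e(Q_{k,n}^{\textsm{CCSGKQ}}) \leq \sup_{\norm[0]{f}_\rkhs \leq 1}\abs[0]{\mu(f) - Q_n^{\textsm{CC}}(f)}$. Now I use norm-equivalence: if $\rkhs$ is norm-equivalent to $C^r$ with constants $C_1,C_2$, then $\norm[0]{f}_\rkhs \leq 1$ implies $\norm[0]{f}_{C^r} \leq C_2$, so the supremum over the RKHS unit ball is bounded by $C_2$ times the worst-case error of $Q_n^{\textsm{CC}}$ over the unit ball of $C^r$. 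The latter is exactly the quantity for which the cited polynomial sparse grid bounds supply the rate $n^{-r/d}(\log n)^{(d-1)(r/d+1)}$; the $F^r$ case is identical but uses the bound for bounded mixed derivatives, giving $n^{-r}(\log n)^{(d-1)(r+1)}$. Both rates then transfer verbatim to $e(Q_{k,n}^{\textsm{CCSGKQ}})$, up to the constant $C_2$ absorbed into the $\bigO$.

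The steps, in order, are: (1) verify that $H(q,d)$ is precisely the Clenshaw--Curtis sparse grid on which the classical Smolyak quadrature error bounds are stated---this is already established in \Cref{sec:smolyak}, where $H(q,d)$ is written as a union of fully symmetric sets built from the nested Clenshaw--Curtis sets $X^i$; (2) cite the classical error rates for the polynomial Smolyak rule over $C^r$ and $F^r$ on this grid; (3) express these rates in terms of the node count $n$ rather than the level $q$ (the cardinality of $H(q,d)$ grows like $n \sim 2^q q^{d-1}$, so the logarithmic factors arise from re-expressing $q$ through $\log n$); (4) apply the optimality inequality together with norm-equivalence to push the bound onto the kernel rule.

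\textbf{Main obstacle.}
The conceptual content is light---the whole argument is a one-line domination by optimality plus a citation---so the real work is bookkeeping. The step I expect to demand the most care is (3): the literature typically states these sparse grid rates as functions of the \emph{level} $q$, and converting cleanly to the \emph{node count} $n$ requires the asymptotic relation between $q$ and $\#H(q,d)$, which is where the precise powers of $\log n$ and the exponents $(d-1)(r/d+1)$ and $(d-1)(r+1)$ come from. One must be careful that the polynomial rule being cited is genuinely a quadrature rule supported on $H(q,d)$ and that its error is measured against the uniform measure $\mu$ on $[-a,a]^d$, matching our setting; any mismatch in the measure or in the definition of the grid would break the domination inequality. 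Provided the cited bounds are stated for exactly this configuration, no further estimation is needed and the theorem follows as a corollary.
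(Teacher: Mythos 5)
Your argument is essentially identical to the paper's proof: cite the Novak--Ritter rates for the classical Clenshaw--Curtis Smolyak rule over the unit balls of $C^r$ and $F^r$, then transfer them to the kernel rule via its worst-case-error optimality among all rules on the same nodes together with norm-equivalence. One small correction: the pointwise inequality $\abs[0]{\mu(f) - Q_{k,n}^{\textsm{CCSGKQ}}(f)} \leq \abs[0]{\mu(f) - Q_n^{\textsm{CC}}(f)}$ for each individual $f$ is false in general---the kernel rule only minimises the \emph{supremum} over the unit ball, not the error at every $f$---but since you immediately pass to the supremum, the chain of inequalities you actually need, $e(Q_{k,n}^{\textsm{CCSGKQ}}) \leq \sup_{\norm[0]{f}_\rkhs \leq 1} \abs[0]{\mu(f) - Q_n^{\textsm{CC}}(f)}$, is valid and the proof stands.
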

\begin{proof} The rates~\eqref{eq:Cbound} and~\eqref{eq:Fbound} hold for the standard Clenshaw--Curtis sparse grid quadrature~\cite{NovakRitter1996,NovakRitter1997} if the worst-case error~\eqref{eq:RKHSerror} is over the unit balls of $C^r$ and $F^r$, respectively. Because kernel quadrature rules have minimal worst-case errors in the induced RKHS among all quadrature rules with fixed nodes, the convergence rates follow from the assumptions of norm-equivalence.
\end{proof}

\section{Numerical examples and computational aspects}\label{sec:num}

This section contains three numerical examples for the fully symmetric kernel Monte Carlo, the Clenshaw--Curtis sparse grid kernel quadrature and the (modified) Gauss--Hermite sparse grid kernel quadrature as well as discussion on some computational aspects. The examples and algorithms, implemented in MATLAB, are available at \texttt{https://github.com/tskarvone/fskq}. Numerous classical sparse grid quadrature methods for MATLAB are implemented in the Sparse Grid Interpolation Toolbox~\cite{spinterp-art}. Parts of our code make use of this toolbox.

We emphasise the examples are not meant to demonstrate superiority of fully symmetric kernel quadrature to other numerical integration methods. Comparisons to other methods are merely to show that fully symmetric kernel quadratures can achieve roughly comparable accuracy. Rather, we aim to show that fully symmetric sets make it possible to apply kernel quadrature rules to large-scale and high-dimensional situations that have been out of the scope of these quadrature rules before.

\subsection{Choosing the length-scale parameter}\label{sec:lengthscale}

Accuracy of any approximation based on a stationary kernel is heavily dependent on the length-scale parameter $\ell > 0$ whose effect is via $k_\ell(\mx-\mx') = k((\mx-\mx')/\ell)$, see for example the Gaussian kernel~\eqref{eq:gausskernel}. Choosing in some sense the best value of this parameter efficiently is an important topic of research both in scattered data approximation literature~\cite{Rippa1999,FasshauerZhang2007} and statistics and machine learning~\cite[Chapter 5]{RasmussenWilliams2006}. See also~\cite[Section 4.1]{BriolOatesGirolamiOsborneSejdinovic2016} for discussion in the context of kernel quadrature.

Unfortunately, we have not been able to come up with a way to exploit the fully symmetric structure of the node set in any of the existing parameter fitting methods, such as marginal likelihood maximisation or cross-validation. Consequently, in large-scale applications that go beyond the limits of naive methods based on inverting the kernel matrix one has to resort to ad hoc techniques to fit the length-scale. In the examples below we either use few enough nodes that naive computations are possible (\Cref{sec:exrand}), integrate a function whose length-scale is known beforehand (\Cref{sec:exapriori}), or set the length-scale somewhat heuristically (\Cref{sec:zcb}). We recognise that the lack of a principled method for choosing the length-scale is a significant shortcoming and hope to fix this in the future.

When the length-scale is changed, the interpretation of the worst-case error as an indicator of accuracy of the quadrature rule is confounded because the RKHS norm $\norm[0]{\cdot}_\rkhs$ depends on the length-scale. This occurs in \Cref{sec:exrand,sec:zcb}. Nevertheless, if one follows the paradigm presented in \Cref{sec:probnum} the WCE still carries a meaningful probabilistic interpretation as the integral posterior standard deviation (STD). As such, it is plotted in all the examples. However, one should not draw too many conclusions from these plots as we have not made any effort to fit the kernel scale parameter (i.e.\ the constant multiplier of the kernel).

\subsection{Closed-form kernel means}\label{sec:closedformkmean}

In kernel quadrature, one needs to be able to evaluate the kernel mean $\kmean(\mx_i) = \int_\Omega k(\mx_i,\mx)\dif\mu(\mx)$ at the nodes $\mx_i$. A number of kernel-measure pairs that yield tractable kernel means are tabulated in~\cite{BriolOatesGirolamiOsborneSejdinovic2016}. It is also possible to evaluate the kernel mean numerically~\cite{SommarivaVianello2006a}. In fact, when fully symmetric sets are used, numerical evaluation may be quite viable as the kernel mean needs to be evaluated only at each generator vector instead of each node.

All our examples use the standard Gaussian kernel
\begin{equation}\label{eq:gausskernel}
k(\mx,\mx') = \exp\bigg(\!-\frac{\norm[0]{\mx-\mx'}^2}{2\ell^2}\bigg)
\end{equation}
with length-scale $\ell > 0$ and unit scale parameter (this parameter only affects magnitude of the worst-case error). The integration domain $\Omega$ and measure $\mu$ are either (i) the whole of $\R^d$ and the standard Gaussian measure with the density $\phi(\mx) = (2\pi)^{-d/2}\exp(-\norm[0]{\mx}^2/2)$ or (ii) the hypercube $[-1, 1]^d$ and the (normalising) uniform measure. In the former case the kernel mean and its integral (needed for computing the WCE using \Cref{eq:wce}) are
\begin{align*}
\kmean(\mx) &= \int_{\R^d} k(\mx,\mx') \phi(\mx') \dif \mx' = \bigg(\frac{\ell^2}{1+\ell^2}\bigg)^{d/2} \exp\bigg(\! - \frac{\norm[0]{\mx}^2}{2(1+\ell^2)}\bigg), \\
\mu(\kmean) &= \int_{\R^d} \kmean(\mx) \phi(\mx)\dif \mx = \bigg(\frac{\ell^2}{2+\ell^2}\bigg)^{d/2}
\end{align*}
and in the latter
\begin{align*}
\kmean(\mx) &= 2^{-d}\int_{[-1,1]^d} k(\mx,\mx') \dif \mx' = \bigg(\frac{\pi\ell^2}{8}\bigg)^{d/2} \prod_{i=1}^d \Bigg[\erf\bigg(\frac{x_i + 1}{\ell\sqrt{2}}\bigg) - \erf\bigg(\frac{x_i - 1}{\ell\sqrt{2}}\bigg)\Bigg], \\
\mu(\kmean) &= 2^{-d}\int_{[-1,1]^d} \kmean(\mx) \dif \mx = \bigg(\frac{\pi\ell^2}{8}\bigg)^{d/2} \Bigg(\sqrt{\frac{2\ell^2}{\pi}}\big(\neper^{-2/\ell^2} - 1\big) + 2 \erf\big(\sqrt{2}/\ell\big)\Bigg)^d,
\end{align*}
where $\erf(x) = \pi^{-1/2}\int_{-x}^x \exp(-t^2)\dif t$ is the standard error function.

\subsection{Example 1: Random generators}\label{sec:exrand}

\begin{figure}[t]
\centering
  \begin{subfigure}[b]{\textwidth}
  \centering
  \includegraphics{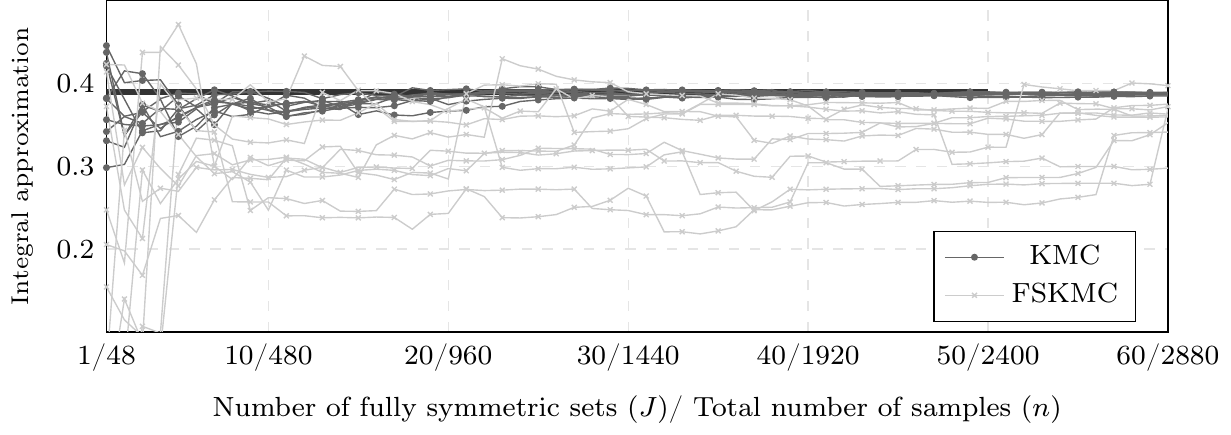}
  \end{subfigure}
  \begin{subfigure}[b]{0.48\textwidth}
  \centering
  \includegraphics{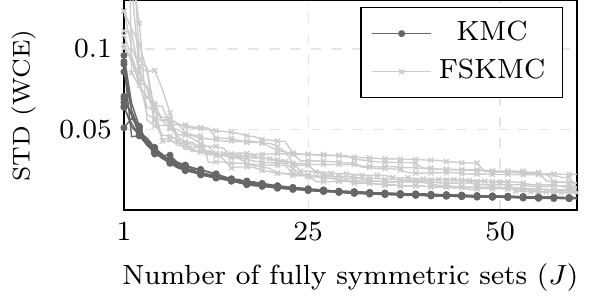}
  \end{subfigure}
  \hspace{0.1cm}
  \begin{subfigure}[b]{0.48\textwidth}
  \centering
  \includegraphics{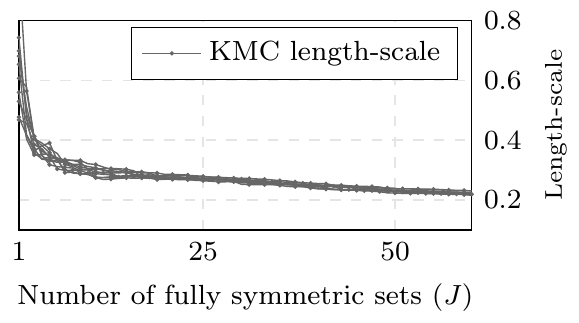}
  \end{subfigure}
  \caption{Numerical integration of the function~\eqref{eq:ex1func} with respect to the standard normal distribution. The upper figure shows integral approximations by the kernel Monte Carlo (KMC) and the fully symmetric kernel Monte Carlo (FSKMC) as a function of the number $J$ of fully symmetric sets and the total number $n$ of Monte Carlo samples for ten realisations. Lower figures display the worst-case errors (standard deviations) and the length-scales fitted. Each fully symmetric set contains 48 nodes (see \cref{table:RGsize}). The underlying black line is the value of the integral that is approximately $0.389$. The generator vectors for the FSKMC have been generated independently of the KMC samples. Both methods use the same length-scale that has been fit using the MC samples of the KMC.}\label{fig:randgencomp}
\end{figure}

Our first example is just a proof of concept to demonstrate that the fully symmetric kernel Monte Carlo (FSKMC) from \Cref{sec:fskmcq} indeed works (though not necessarily that well). We try numerically integrating the non-radial function
\begin{equation}\label{eq:ex1func}
f(\mx) = \exp\Big( \sin(5\norm[0]{\mx})^2 - (x_1^2 + 0.5x_2^2 + 2x_3^4) \Big)
\end{equation}
over $\R^3$ and with respect to the standard normal distribution. Results for the kernel Monte Carlo (KMC)~\cite{RasmussenGhahramani2002,BriolOatesGirolamiOsborneSejdinovic2016}, where the nodes to be used in kernel quadrature are drawn randomly, and the fully symmetric kernel Monte Carlo are presented in \Cref{fig:randgencomp}.

For both the KMC and FSKMC, the kernel length-scale was fit by the method of maximum likelihood (see~\cite[Chapter 5]{RasmussenWilliams2006}) using the MC samples of KMC. We have also experimented with fitting the FSKMC length-scale using the randomly generated fully symmetric sets, but in this case the fitted length-scale was markedly larger and the integral approximations much worse.

It is clear that the FSKMC fares worse. However, the FSKMC has a tremendous advantage in computational scalability in the number of nodes. In general, when the number of nodes exceeds some tens of thousands, kernel quadrature methods based on naively solving the weights from the linear system~\eqref{eq:KQweights}, such as the KMC, become infeasible due to their the cubic time and quadratic space complexity. In contrast, fully symmetric kernel quadratures such as FSKMC remain feasible: only $Jn$ (recall that $J \leq n$ is the number of fully symmetric sets) kernel evaluations and solving a linear system of $J$ equations, as opposed to $n^2$ and $n$, respectively, of naive methods, are required. For instance, using FSKMC with 1,000 fully symmetric sets (i.e.\ 48,000 nodes) in this example would require 48,000,000 kernel evaluations and solving a linear system of 1,000 equations, neither of which is a computational challenge, while the KMC weights for 48,000 nodes cannot be computed on a standard computer.

The difference becomes even more pronounced in higher dimensions where fully symmetric sets contain significantly more points. The next two examples demonstrate the superior scalability of fully symmetric kernel quadratures.

\begin{figure}[t]
\centering
  \begin{subfigure}[b]{\textwidth}
  \centering
  \includegraphics{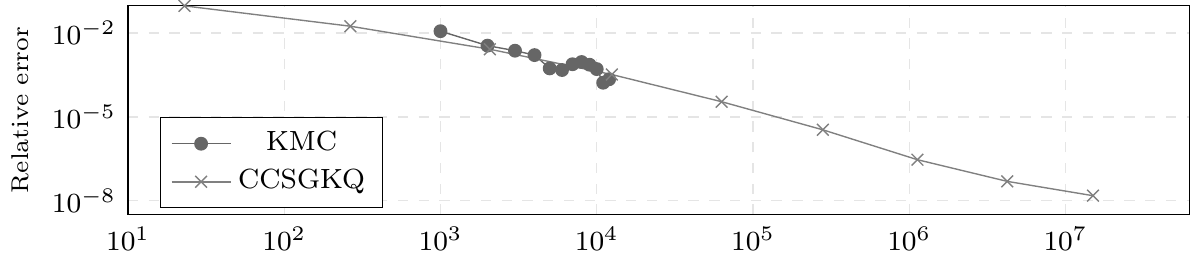}
  \end{subfigure}
  \begin{subfigure}[b]{\textwidth}
  \centering
  \includegraphics{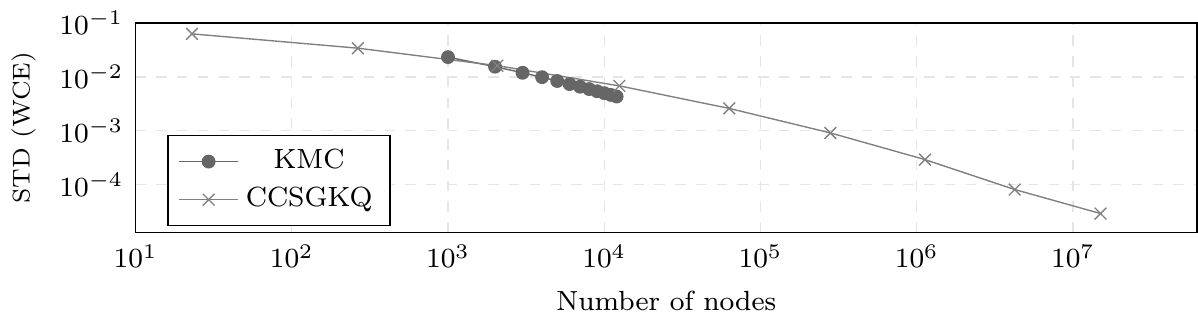}
  \end{subfigure}
  \caption{Relative error $\abs[0]{\mu(f) - Q_k(f)}/\mu(f)$ (upper) and the worst-case error (lower; standard deviation) for integration of the function~\eqref{eq:apriorifunc} on the 11-dimensional hypercube with the kernel Monte Carlo quadrature (KMC) and the Clenshaw--Curtis sparse grid kernel quadrature (CCSGKQ). The number of nodes for the KMC varied from 1,000 to 12,000 (with increments of 1,000) and CCSGKQ used levels from 1 to 9. These corresponded to 23; 265; 2,069; 12,497; 63,097; 280,017; 1,129,569; 4,236,673; and 15,005,761 nodes and 2, 4, 8, 17, 36, 79, 172, 379, and 832 fully symmetric sets.}\label{fig:apriori}
\end{figure}

\subsection{Example 2: A priori known length-scale}\label{sec:exapriori}

This simple example demonstrates that sparse grid kernel quadrature based on fully symmetric sets is numerically stable, consistent and works well for an extremely large number of nodes.

We work in the domain $\Omega = [-1,1]^d$, $d=11$, equipped with the normalising uniform measure. The integrand is
\begin{equation}\label{eq:apriorifunc}
f(\mx) = \exp\bigg(\!-\frac{\norm[0]{\mx-\mx_f}^2}{2\ell_f^2}\bigg)
\end{equation}
with $\ell_f = 0.8$ and $\mx_f$ is a vector of $11$ evenly spaced points on the interval $[0.2, 0.5]$ (with the end points included). The integral we seek to approximate is
\begin{equation*}
2^{-d}\int_{[-1,1]^{d}} f(\mx) \dif \mx = \bigg(\frac{\pi\ell_f^2}{8}\bigg)^{d/2} \prod_{i=1}^{d} \Bigg[\erf\bigg(\frac{x_{f,i} + 1}{\ell_f\sqrt{2}}\bigg) - \erf\bigg(\frac{x_{f,i} - 1}{\ell_f\sqrt{2}}\bigg)\Bigg] \approx 0.0392.
\end{equation*}
We use the Gaussian kernel with $\ell = \ell_f = 0.8$ and the Clenshaw--Curtis sparse grid kernel quadrature (CCSGKQ). Results for the relative error $\abs[0]{\mu(f) - Q_k(f)}/\mu(f)$ and the kernel worst-case error (or standard deviation) are shown in \Cref{fig:apriori} for the levels $q=1,\ldots,9$, last of them corresponding to the total of 15,005,761 nodes. \Cref{table:tocs} contains a breakdown of the computational times required. We also display results for the kernel Monte Carlo using up to 12,000 nodes. For this many nodes, the time taken by the CCSGKQ is negligible while the KMC is noticeably slowing down.

\begin{table}[t]
\begin{center}
\captionof{table}{Computational times in seconds for the KMC (left) and CCSGKQ (right) in Example 2. The columns indicate the time taken by kernel evaluations (kernel), computing the weights from a linear system of equations (weights), and constructing the fully symmetric sets (FSS). The MATLAB code was run on a laptop sporting Intel Core i5-6300 2.40 GHz processor and 8 GB of RAM.}\label{table:tocs}
\small
\caption*{\small{\textbf{Computational times (seconds) for KMC / CCSGKQ}}}\vspace{-0.1cm}
\begin{tabular}[t!]{c|c c}
\arrayrulecolor{gray4}
$n$ & kernel & weights \\
\Xhline{1pt}
1k & 0.08 & 0.01 \\
2k & 0.15 & 0.07 \\
3k & 0.32 & 0.20 \\
4k & 0.54 & 0.47 \\
5k & 0.79 & 0.85 \\
6k & 1.17 & 1.42 \\ 
7k & 1.49 & 2.21 \\ 
8k & 1.92 & 3.20 \\ 
9k & 2.43 & 4.47 \\ 
10k & 2.98 & 6.03 \\ 
11k & 3.62 & 8.21 \\ 
12k & 4.45 & 10.61 \\
\end{tabular} \hspace{1cm}
\begin{tabular}[t!]{c|c c c}
\arrayrulecolor{gray4}
$J$ / $n$ & kernel & weights & FSS \\
\Xhline{1pt}
2 / 23 & $< 0.01$ & $< 0.001$ & 0.07 \\
4 / 265 & $< 0.01$ & $< 0.001$ &  0.07 \\
8 / 2k & $< 0.01$ & $< 0.001$ & 0.04 \\ 
17 / 12k & 0.02 & $< 0.001$ & 0.04 \\
36 / 63k & 0.14 & $< 0.001$ & 0.10 \\ 
79 / 280k & 1.26 & 0.003 & 0.27 \\
172 / 1.1m & 10.91 & 0.004 & 0.82 \\
379 / 4.2m & 90.41 & 0.004 & 2.63 \\
832 / 15m & 760.00 & 0.072 & 8.61 \\
\multicolumn{3}{c}{} \\
\multicolumn{3}{c}{} \\
\multicolumn{3}{c}{} \\
\end{tabular}
\end{center}
\end{table}

It is seen that for a similar numbers of nodes the two methods are roughly equivalent. When the level, and consequently the number of nodes, increases, the CCSGKQ becomes more accurate which shows that the nodes are selected well enough and that the weights are being computed correctly. For the highest levels 8 and 9, the sparse grids consisted of 379 and 832 fully symmetric sets or 4,236,673 and 15,005,761 nodes, resulting in \sloppy{${379 \times 4{,}236{,}673 = 1{,}605{,}699{,}067}$} and $832 \times 15{,}005{,}761 = 12{,}484{,}793{,}152$ kernel evaluations needed to compute the weights. It is not possible to compute the KMC weights for this many nodes.

\subsection{Example 3: Zero coupon bonds}\label{sec:zcb}

This example demonstrates that fully symmetric kernel quadrature rules are also feasible in high dimensions. We use the toy example from~\cite{NinomiyaTezuka1996} (see also~\cite[Section 6.1]{Holtz2011}) that is concerned with pricing zero coupon bonds through simulation of a discretised stochastic differential equation model. The model is convenient for our purposes as there is a closed-form solution that serves as a baseline and finer discretisations correspond to higher integration dimensions.

Consider the stochastic differential equation (going by the name Vasicek model)
\begin{equation*}
\dif r(t) = \kappa\big(\theta - r(t)\big)\dif t + \sigma \dif W(t),
\end{equation*}
where $W(t)$ is the standard Brownian motion and $\kappa$, $\theta$, and $\sigma$ are positive parameters. We want to solve this SDE at the time $t=T$. The Euler--Maruyama discretisation with the uniform step size $\Delta t = T/d$ is
\begin{equation*}
r_k = r_{k-1} + \kappa(\theta - r_{k-1})\Delta t + \sigma x_k, \quad k=1,\ldots,d,
\end{equation*}
where $x_k \sim \gauss(0, \Delta t)$ are independent and $r_0$ is a free parameter. The quantity we are interested in is the Gaussian integral
\begin{equation}\label{eq:zcbintegral}
\begin{split}
P(0,T) &\coloneqq \expec\bigg[\exp\bigg(-\Delta t \sum_{k=0}^{d-1} r_k \bigg)\bigg] \\
&= \exp(-\Delta t r_0) \int_{\R^{d-1}} \exp\Big(-\Delta t f\big(\sqrt{\Delta t}\mx\big) \Big) \phi(\mx) \dif \mx,
\end{split}
\end{equation}
where $f(\mx) = \sum_{k=1}^{d-1} r_k$. This integral admits a closed-form solution
\begin{equation}\label{eq:zcbtrue}
P(0,T) = \exp\bigg(-\frac{(\gamma + \beta_d r_0) T}{d}\bigg)
\end{equation}
with $\beta_k = \sum_{i=1}^k (1 - \kappa\Delta t)^{j-1}$ and $\gamma = \sum_{k=1}^{d-1} (\beta_k \kappa\theta\Delta t - (\beta_k\sigma\Delta t)^2/2)$.
As can be seen, the number $d$ of discretisation steps controls the integration dimension that is $d-1$.

In the integration experiment, we set
\begin{equation*}
\kappa = 0.1817303, \quad \theta = 0.0825398957, \quad \sigma = 0.0125901, \quad r_0 = 0.021673, \quad T = 5.
\end{equation*}
These values are equal to those used in~\cite{NinomiyaTezuka1996,Holtz2011}. We consider numerical integration of~\eqref{eq:zcbintegral} for $d = 10,\ldots,300$ using the Gauss--Hermite sparse grid kernel quadrature (GHSGKQ) with $q=2$. We use the Gaussian kernel with the somewhat heuristic choice $\ell = d$ of the length-scale. The central node (i.e.\ the origin) tended to have a fairly large negative weight so it was removed to improve numerical stability. Results for the relative error and the worst-case error (standard deviation) are depicted in \Cref{fig:zcb}. For comparison, we have also included a Monte Carlo estimate (KMC is feasible only for dimensions somewhat less than 100 so it was excluded).

The results show that the GHSGKQ is able to maintain an accuracy that is generally better than that of the standard MC. This indicates that fully symmetric kernel quadratures have potential also in very high dimensions. Note the dimension-adaptive methods used in~\cite{Holtz2011} would be more accurate in this example. It is probable that fully symmetric kernel quadratures could be combined with these methods.

\begin{figure}[t]
  \centering
  \includegraphics{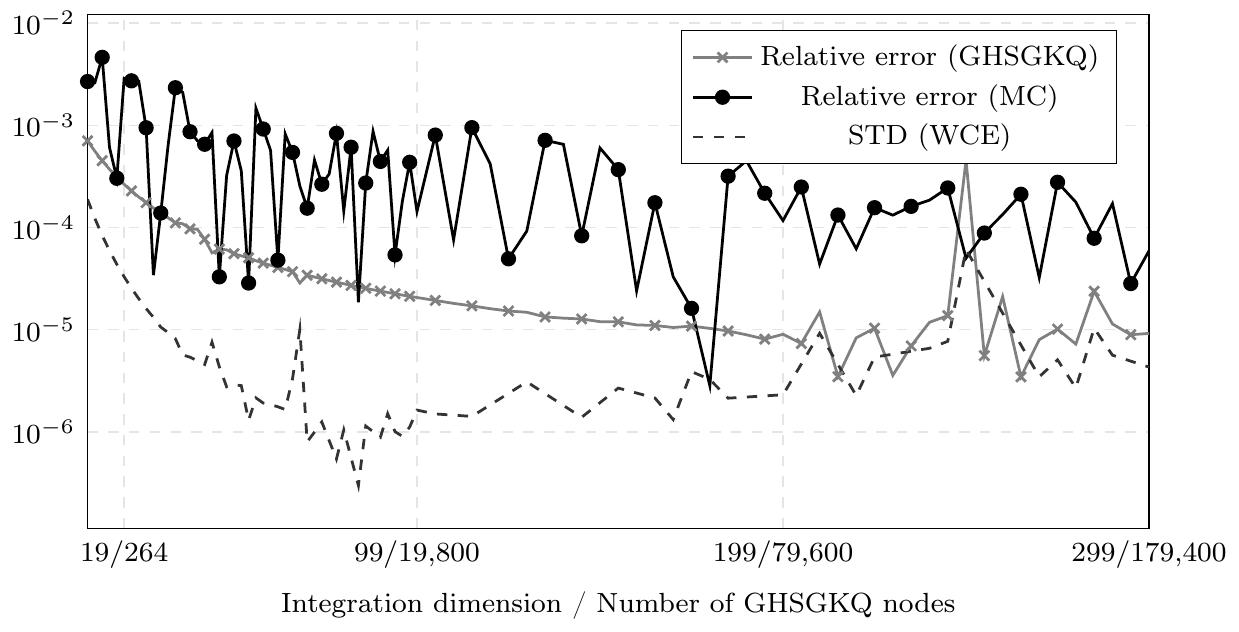}
  \caption{Relative error and the worst-case error (standard deviation) of the Gauss--Hermite sparse grid kernel quadrature (GHSGKQ) for the zero coupon bond setup of \Cref{sec:zcb}. Value of the integral~\eqref{eq:zcbintegral}, as computed from \Cref{eq:zcbtrue}, is between 0.81 and 0.815 for all dimensions. Also depicted is an integral estimate by the standard Monte Carlo using the same number of points as the GHSGKQ in each dimension.}\label{fig:zcb}
\end{figure}

\section{Conclusions and discussion}

We introduced fully symmetric kernel quadrature rules and showed that their weights can be computed exactly with a very simple algorithm under some assumptions on the integration domain and measure and the kernel. We also proposed using sparse grids in conjunction with this algorithm and provided some simple theoretical convergence analysis for Clenshaw--Curtis sparse grids. In the schemes presented, the nodes can be selected in a comparatively flexible manner. Three numerical experiments demonstrated that the approach is sound and can cope both with a very large number of nodes and high-dimensional domains.

Even with the tremendous computational simplifications provided by the fully symmetric sets, kernel quadrature rules remain computationally more demanding than most classical quadrature rules. In the end, the decision on which method to use is highly dependent on the computational complexity of evaluating the integrand. Extremes where the rules we have developed are not necessarily useful are easy to identify: (i) in \Cref{sec:exapriori} it is clearly absurd that an integrand as cheap to evaluate as the kernel is evaluated 15 million times while 12 billion kernel evaluations are used to compute the weights, (ii) whereas when the integrand, being for example a complex computer simulation, is very expensive the computational overhead from non-symmetric and likely more accurate kernel quadrature rules is going to be negligible. Consequently, we believe that the method presented in this article is best suited for ``moderately'' expensive integrands in the case when high accuracy is required or probabilistic modelling of uncertainty in the integral estimate desired. This is of course somewhat ambiguous. Precise (and useful) analysis is complicated by, among others, the facts that we do not know how the accuracy of a fully symmetric kernel quadrature rule compares to that of a non-symmetric one (e.g.\ \Cref{thm:RGconv} is only about rates, not the associated constant coefficients) and that it is difficult to account for the value---nor is it easy to decide how much one should value this measure to begin with---one places on the uncertainty measure.

Besides what was discussed in the preceding paragraph, there is a number of topics that could be pursued in the future. These include
\begin{itemize}
\item Developing principled methods for choosing the kernel length-scale for large-scale problems.
\item Proper probabilistic approach to large-scale integration problems. We anticipate that much can be gained in pursuing this direction.
\item As discussed in \Cref{sec:smolyak,sec:wcemin}, there is much room for improvement via optimisation of the fully symmetric sets.
\item Rows of the submatrices $\mK_{ij}$ in \Cref{eq:blockmatrix} typically contain several non-distinct elements. Minor computational improvements might be possible.
\end{itemize}

\section*{Acknowledgements}

We thank Fran\c{c}ois-Xavier Briol, Jon Cockayne, Chris Oates, Jens Oettershagen, and Filip Tronarp for discussion and constructive comments. Suggestions by the anonymous reviewers helped to improve many parts of the article.

\bibliographystyle{siamplain}
\bibliography{references}

\end{document}